\DeclareMathOperator{\tr}{tr}
\DeclareMathOperator{\Div}{div}
\DeclareMathOperator{\Ric}{Ric}
\DeclareMathOperator{\reg}{reg}
\DeclareMathOperator{\sing}{sing}
\newcommand{\R}{\mathbb{R}}
\newcommand{\N}{\mathbb{N}}
\newcommand{\e}{\varepsilon}
\theoremstyle{plain}
\newcounter{alpha}
\newtheorem{prop_}{Proposition}[section]
\newtheorem{prop}[prop_]{Proposition}
\newtheorem{lema}[prop_]{Lemma}
\newtheorem*{lema0}{Lemma}
\newtheorem{teo}[prop_]{Theorem}
\newtheorem*{teo0}{Theorem}
\newtheorem{theorema}[alpha]{Theorem}
\newtheorem*{obs}{Remark}
\title[\resizebox{5.2in}{!}{The second inner variation of energy and the Morse index of limit interfaces}]{The second inner variation of energy and the Morse index of limit interfaces}
\author{Pedro Gaspar} 
\address{Instituto de Matem\'atica Pura e Aplicada (IMPA) \\ Estrada Dona Castorina 110 \\ 22460-320 Rio de Janeiro \\ Brazil}
\email{phgms@impa.br}
\thanks{The author was partly supported by NSF grant DMS-1311795}
\begin{document}

\begin{abstract} In this article we study the second variation of the energy functional associated to the Allen-Cahn equation on closed manifolds. Extending well known analogies between the gradient theory of phase transitions and the theory of minimal hypersurfaces, we prove the upper semicontinuity of the eigenvalues of the stability operator and consequently obtain upper bounds for the Morse index of limit interfaces which arise from solutions with bounded energy and index without assuming any multiplicity or orientability condition on these hypersurfaces. This extends some recent results of N. Le \cite{LeInner,LeEigen} and F. Hiesmayr \cite{Fritz}.
\end{abstract}

\maketitle

\section{Introduction}

In this article we are interested in understanding the limit behavior of solutions to the elliptic \emph{Allen-Cahn equation} on a closed Riemannian manifold $M^n$, $n\geq 3$, namely $u:M \to \R$ such that
	\begin{equation} \label{eq:ac}
		-\e \Delta u + W'(u)/\e=0, 
	\end{equation}
as $\e$ goes to zero. Here $W$ is a double-well potential, such as $W(u)=(1-u^2)^2/4$. This equation and its parabolic counterpart arise in the gradient theory of phase transition phenomena within the van der Waals-Cahn-Hilliard theory \cite{AllenCahn}. The solutions to these equations are known to be related to critical points of the area functional. This connection was studied in the context of $\Gamma$-convergence by Modica-Mortola \cite{ModicaMortola} (see also \cite{KohnSternberg}) where the convergence of minimizers of the associated energy functional
	\begin{equation}
		E_\e(u) = \int_M \e\frac{|\nabla u|^2}{2} + \frac{W(u)}{\e}, \quad u \in H^1(M),
	\end{equation}
to critical points of the area functional is studied. For the corresponding problem with the constraint $\int_M u = c$ we refer to Modica \cite{Modica} and Sternberg \cite{Sternberg}. Since then strong parallels between these objects have been drawn, see e.g. the surveys \cite{Pacard, Savin} and the references therein.

For more general variational solutions of the Allen-Cahn equation refined results about the limit behavior were carried out by Hutchinson-Tonegawa \cite{HutchinsonTonegawa}, Tonegawa \cite{Tonegawa}, Tonegawa-Wickramasekera \cite{TonegawaWickramasekera} and Guaraco \cite{Guaraco}. Roughly speaking they proved

\begin{teo0} \label{thm:conv0}
Let $M$ be a closed Riemannian manifold of dimension $n\geq 3$, and let $\{u_{\e_k}\}$ be a sequence of solutions to \eqref{eq:ac} in M with $\e=\e_k \downarrow 0$. Assume that the sequences $\sup_M|u_{\e_k}|$, $E_{\e_k}(u_{\e_k})$ and  $m(u_{\e_k})$ are bounded, where $m(u_\e)$ denotes the Morse index of $u_\e$ as a critical point of $E_\e$. Then as $\e_k \downarrow 0$ its level sets accumulate around a minimal hypersurface $\Gamma \subset M$ which is smooth and embedded outside a singular set $\sing \Gamma$ of Hausdorff dimension at most $n-8$. Moreover there are positive integers $m_1,\ldots, m_N$ such that
	\[\lim_k E_{\e_k}(u_{\e_k}) = 2\sigma \sum_{j=1}^N m_j \mathcal{H}^{n-1}(\Gamma_j),\]
where $\Gamma_1,\ldots, \Gamma_N$ are the connected components of $\Gamma$, and $\sigma = \int_{-1}^1 \sqrt{W/2}$.
\end{teo0}

The minimal hypersurface $\Gamma$ is called the \emph{limit interface} associated to the corresponding solutions, and the positive integers $m_j$ are the \emph{multiplicities} of $\Gamma_j$. For a more refined notion of convergence see Theorem \ref{thm:conv1} below. 

\begin{obs}
For $n=2$ similar conclusions hold, except the regularity of the limit interface. In this case, the limit varifold is supported in an union $\Gamma$ of geodesic arcs with at most $p=\limsup_k m(u_{\e_k})$ junction points, according to \cite{Tonegawa}. It was proved recently by C. Mantoulidis \cite{Mantoulidis1} that if $p=1$ then $\Gamma$ is an immersed geodesic and the possible junction point is a transverse intersection.
\end{obs}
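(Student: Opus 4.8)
\emph{Proof of the Remark (sketch).} The plan is to run, for $n=2$, the same compactness and varifold-convergence machinery that underlies the convergence theorem stated above --- which is dimension-independent except for its closing regularity step --- and then to replace that step by the structure theory of stationary integral $1$-varifolds on a surface together with an index count. After passing to a subsequence, the energy measures associated to $u_{\e_k}$ converge weakly to a Radon measure $\mu$ on $M$ and the associated varifolds to a varifold $V$. Using the divergence-free stress--energy tensor of solutions of \eqref{eq:ac} to express the first variation of $E_{\e_k}$, and invoking the vanishing of the discrepancy $\e_k|\nabla u_{\e_k}|^2/2-W(u_{\e_k})/\e_k\to 0$ as a measure, one gets $\delta V=0$; a blow-up argument together with the classification of bounded one-dimensional profiles of \eqref{eq:ac} then shows that $V$ is integral with $\mu=2\sigma\|V\|$. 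This is exactly the Hutchinson--Tonegawa argument, and it applies verbatim when $n=2$. Since a stationary integral $1$-varifold in a surface is, by the Allard--Almgren structure theorem, a locally finite union of geodesic arcs meeting at points where the unit tangents balance, the first half of the statement follows.

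For the bound on the number $J$ of junction points one uses the hypothesis $\limsup_k m(u_{\e_k})=p$. Choose pairwise disjoint geodesic balls $B_i$, one around each junction $x_i$ of $\spt V$. I claim that, for all large $k$, the second variation $\delta^2 E_{\e_k}(u_{\e_k})$ is negative on some function supported in $B_i$: otherwise, along a subsequence $u_{\e_k}$ would be stable in $B_i$, and by the Tonegawa--Wickramasekera regularity theory used in \cite{Guaraco} the limit in $B_i$ would be a stable minimal hypersurface --- in a surface, a union of smooth geodesic arcs no two of which cross --- so $x_i$ could not be a junction. Since these negative directions have pairwise disjoint supports, the global Morse index dominates their number, giving $J\leq\liminf_k m(u_{\e_k})\leq p$, which is the second half, following \cite{Tonegawa}. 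The mechanism behind the instability of a junction is that a transverse crossing of two geodesics can be resolved into two disjoint arcs of strictly smaller length, producing a negative direction for the Allen--Cahn stability operator localized near $x_i$.

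The refinement for $p=1$ is the theorem of \cite{Mantoulidis1}, whose proof I would sketch as follows. There is now at most one junction $x_0$: away from it $\spt V$ is a union of smooth geodesics, while the tangent cone of $V$ at $x_0$ is a stationary union of geodesic rays, necessarily $k\geq 1$ lines crossing pairwise transversally (distinct geodesic rays through a point in a surface are transverse). If $k\geq 3$, then a neighborhood of $x_0$ carries local Morse index at least $2$ --- two independent resolutions of the crossing give two negative directions for the stability operator --- so $m(u_{\e_k})\geq 2$ for large $k$, contrary to $p=1$; hence $k=2$, the junction is a transverse crossing of two arcs, and gluing the smooth pieces through $x_0$ presents $\Gamma$ as the image of a single immersed geodesic with one transverse self-intersection. \textbf{The main obstacle} is the local analysis near a junction: making the negative-direction constructions quantitative and uniform as $\e_k\to 0$ for the count, and, more delicately, carrying out the blow-up of $u_{\e_k}$ at the junction in the case $p=1$ so as to control its tangent cone and exclude multiple or degenerate junctions. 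This local PDE analysis near a crossing is the technical heart, supplied by \cite{Tonegawa} and \cite{Mantoulidis1} respectively.
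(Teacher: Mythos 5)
First, note that the paper itself offers no proof of this Remark: it is a statement recording results of \cite{Tonegawa} and \cite{Mantoulidis1} (together with the observation that the $n$-independent part of Theorem \ref{thm:conv1} survives when $n=2$), so there is no internal argument to compare yours against. Judged on its own terms, the first half of your sketch is the standard route: the Hutchinson--Tonegawa machinery (stress--energy tensor, vanishing discrepancy, integrality via blow-up) is indeed dimension-free, the Allard--Almgren structure theorem for stationary integral $1$-varifolds gives the geodesic network, and the junction count via ``on any $p+1$ disjoint balls the solution is stable in at least one, and local stability forces local regularity'' is the right mechanism. One small citation slip: for $n=2$ the regularity input in the stable case is Tonegawa's $1$-varifold analysis itself, not the Tonegawa--Wickramasekera/Schoen--Simon theory invoked in \cite{Guaraco}, which is a hypersurface statement for $n\geq 3$.

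The genuine gap is in the $p=1$ case. You assert that the tangent cone of $V$ at the junction is ``necessarily $k\geq 1$ lines crossing pairwise transversally.'' A stationary integral $1$-varifold cone in the plane is a union of rays with integer multiplicities whose weighted unit tangents sum to zero; it need not be a union of full lines --- the unit-density triple junction with three rays at $120^\circ$ is the basic example --- and rays may carry higher multiplicity. Ruling out such configurations, and more generally controlling the blow-up of $u_{\e_k}$ at the junction so that the singularity is exactly one transverse crossing of two unit-density arcs, is precisely the content of \cite{Mantoulidis1}; it requires curvature estimates for the level sets and the two-phase structure of the limit (the sectors around a junction must be consistently labelled by the limits $\pm1$ of $u_{\e_k}$), not merely the index count. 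As written, your argument assumes the conclusion at this step; the subsequent ``$k\geq 3$ lines give local index at least $2$'' count is also only heuristic until the negative directions are actually produced for the Allen--Cahn operator uniformly in $\e_k$. You do flag the local analysis near the junction as the main obstacle, which is honest, but the specific claim about the shape of the tangent cone should not be presented as automatic.
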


Using this convergence result along with min-max techniques for semilinear PDEs, Guaraco was able to provide an alternative proof of the celebrated result of Almgren-Pitts and Schoen-Simon about the existence of closed minimal hypersurfaces in closed Riemannian manifolds. This phase transitions approach simplifies considerably the variational argument of Almgren-Pitts to prove the existence of a stationary limit but it relies on the regularity theory of Wickramasekera \cite{Wickramasekera}, which is a sharpening of the classical Schoen-Simon compactness theory for stable minimal hypersurfaces. One is led then to the problem of describing the limit interfaces which arise from this strategy. For some results along these lines see, for instance, \cite{WangWei,Mantoulidis1}, for finite index solutions on surfaces, and \cite{GasparGuaraco} for least area limit interfaces.

One may expect, for example, to estimate the Morse index of limit interfaces in terms of the stability index of the solutions. In \cite{LeInner} N. Le obtained such an estimate in an Euclidean domain assuming either that the limit interface has multiplicity one, or that it is connected and the solutions satisfy an additional hypothesis. More precisely if $\{u_\e\}$ are critical points for the energy functional $E_\e$, he calculates the second \emph{inner} variation of $E_\e$ at $u_\e$ -- which is given by precomposing the solution with the flow generated by a compactly supported smooth vector field -- in terms of the energy density, i.e. the measure $de_\e = (\e|\nabla u|^2/2 + W(u)/\e)d\mathcal{H}^n$, and the first derivatives of $u_\e$. Le uses then a convergence result about the vector measures $\e\nabla u_\e \otimes \nabla u_\e \, d\mathcal{H}^n$ to prove that the second inner variation of $E_\e$ at $u_\e$ converges to the second variation of the corresponding limit interface, plus an error term. This strategy gives the following

\begin{teo0}[\cite{LeInner}] Assume $\{u_{\e_k}\}$ satisfy the conditions of the Theorem above. Assume also that either
	\begin{itemize}
		\item[(A)] $m_1=\ldots=m_N=1$, or
		\item[(B)] $N=1$ and the \emph{equipartition of energy} holds i.e.
			\[\limsup_k \int_M \left| \frac{\e_k|\nabla u_{\e_k}|^2}{2} - \frac{W''(u_{\e_k})}{\e_k} \right|\,d\mathcal{H}^n =0.\]
	\end{itemize}
Then the regular part of the limit interface has Morse index at most $p$.
\end{teo0}

\begin{obs}
Le's result is more general in the sense one need not to assume the solutions have uniformly bounded index provided the limit interface is as regular as guaranteed by Guaraco-Hutchinson-Tonegawa-Wickramasekera theorem. Furthermore, assuming also that the solutions are $L^1$-local minimizers for $E_\e$ Le proves in \cite{LeEigen} the upper semicontinuity of the eigenvalues of the corresponding Jacobi operators, see \cite[Corollary 1.1]{LeEigen} and Theorem \ref{thm:a} below.
\end{obs}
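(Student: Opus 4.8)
The remark records two facts from Le's work, and I would establish each by tracing where its hypothesis enters the second-variation analysis.

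For the first assertion I would revisit the computation of the second inner variation of $E_\e$. Given a compactly supported vector field $X$ on $M$ with flow $\phi_t$, one differentiates $t\mapsto E_\e(u_\e\circ\phi_t)$ twice at $t=0$; since $u_\e$ solves \eqref{eq:ac} the first inner variation vanishes and the second becomes a quadratic form $Q_\e(X)$ expressible through the energy measure $de_\e$ and the diffuse stress-energy tensor $T_\e=\bigl(\tfrac{\e}{2}|\nabla u_\e|^2+\tfrac{1}{\e}W(u_\e)\bigr)\mathrm{Id}-\e\,\nabla u_\e\otimes\nabla u_\e$. The structural point is that although $Q_\e(X)$ depends on the full $1$-jet of $X$, after passing to the limit -- using that $de_{\e_k}$ converges to $2\sigma\sum_j m_j\,\mathcal{H}^{n-1}\llcorner\Gamma_j$ together with Le's convergence of $\e\,\nabla u_\e\otimes\nabla u_\e\,d\mathcal{H}^{n}$ to $2\sigma\sum_j m_j\,(\nu_j\otimes\nu_j)\,\mathcal{H}^{n-1}\llcorner\Gamma_j$ -- the tangential part of $X$ drops out and $\lim_k Q_{\e_k}(X)$ reduces to the second variation of area of $\Gamma$ on the normal field $X^\perp=\langle X,\nu\rangle\nu$, namely $\sum_j m_j\int_{\Gamma_j}\bigl(|\nabla_{\Gamma_j}(X\!\cdot\!\nu)|^2-(|A|^2+\Ric(\nu,\nu))(X\!\cdot\!\nu)^2\bigr)$, up to an error governed by the equipartition discrepancy. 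In this limiting identity the only thing used about $\Gamma$ is that it is smooth away from $\sing\Gamma$ of dimension $\le n-8$, so that its Jacobi form and index are defined. The uniform index bound in the convergence theorem stated above serves, through Wickramasekera's theory, solely to produce this regularity; if regularity is postulated directly, the same limit argument transfers ``$\delta^2E_\e(u_\e)\ge 0$ on a subspace of codimension $m(u_\e)$'' into ``$\Gamma$ has index at most $\liminf_k m(u_{\e_k})$'', with no a priori finiteness of the solution indices required.

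For the second assertion I would argue via the min-max (Courant--Fischer) characterization of eigenvalues. Writing $L_\e=-\e\Delta+\e^{-1}W''(u_\e)$ for the linearized operators, the relevant eigenvalues are those $\lambda_j^\e$ of the rescaled operator $\e^{-1}L_\e$, whose signs detect the Morse index of $u_\e$, while the limiting object is the Jacobi operator $J_\Gamma=-\Delta_\Gamma-(|A|^2+\Ric(\nu,\nu))$ with eigenvalues $\lambda_j(J_\Gamma)$. To prove $\limsup_k\lambda_j^{\e_k}\le\lambda_j(J_\Gamma)$ it suffices to exhibit, for each small $\e$, a $j$-dimensional competitor space whose largest Rayleigh quotient $\langle L_\e\varphi,\varphi\rangle/(\e\,\|\varphi\|_{L^2}^2)$ is close to $\lambda_j(J_\Gamma)$. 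Taking an $L^2(\Gamma)$-orthonormal family $\psi_1,\dots,\psi_j$ of low eigenfunctions of $J_\Gamma$, I would lift each to $\varphi_i^\e(x)=\psi_i(\pi(x))\,\mathbb{H}'\!\bigl(d(x)/\e\bigr)$, where $\pi$ and $d$ are the nearest-point projection and signed distance to $\Gamma$ and $\mathbb{H}$ is the heteroclinic ($-\mathbb{H}''+W'(\mathbb{H})=0$). Because $\mathbb{H}'$ spans the kernel of the transverse operator $-\partial_{tt}+W''(\mathbb{H})$ and $\int_\R(\mathbb{H}')^2=2\sigma$, a one-dimensional reduction shows the transverse part of $\langle L_\e\varphi_i^\e,\varphi_i^\e\rangle$ cancels to leading order and the next order yields the curvature term, so that $\langle L_\e\varphi_i^\e,\varphi_i^\e\rangle/(\e\,\|\varphi_i^\e\|_{L^2}^2)\to\langle J_\Gamma\psi_i,\psi_i\rangle/\|\psi_i\|_{L^2(\Gamma)}^2$; linear independence of the $\varphi_i^\e$ for small $\e$ follows from the concentration of supports and the independence of the $\psi_i$. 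Here the $L^1$-local minimality is exactly what rules out higher multiplicity and forces equipartition of energy, so that near $\Gamma$ the solution has the clean single-layer profile $\mathbb{H}(d/\e)$ and $\e\,\nabla u_\e\otimes\nabla u_\e\,d\mathcal{H}^{n}$ converges with no defect measure; this removes the error term that would otherwise obstruct the comparison of quotients.

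The main obstacle in both parts is the control of the defect between the diffuse second variation and the sharp-interface Jacobi form. For the index estimate this is the equipartition/multiplicity error neutralized by hypotheses (A) or (B). For the eigenvalue statement, although only the one-sided bound $\limsup_k\lambda_j^{\e_k}\le\lambda_j(J_\Gamma)$ is sought, one must still ensure the lifted competitors do not overshoot: this needs genuine layer asymptotics of $u_\e$ near $\Gamma$ (sharper than varifold convergence) to make the transverse cancellation precise and to extract the curvature term, together with a capacity estimate near $\sing\Gamma$ -- exploiting $\dim\sing\Gamma\le n-8$ via a logarithmic cutoff -- showing that excising a neighborhood of the singular set costs a negligible amount in the Rayleigh quotient, so that the $\varphi_i^\e$ are legitimate competitors.
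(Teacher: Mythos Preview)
This statement is a \emph{Remark}: in the paper it carries no proof at all, merely recording two facts from Le's work \cite{LeInner,LeEigen} by citation. So there is nothing to compare your argument against on the paper's side; what you have written is a sketch of how Le's cited results might be established, which goes beyond the paper's intent here.

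That said, two comments on your sketch. For the first assertion your outline is essentially right in spirit---the bounded-index hypothesis enters only through Wickramasekera's regularity, and if that regularity is assumed the inner-variation computation survives intact---but your description of the limit is slightly off. The limit of $Q_{\e_k}(X)$ is not $\delta^2V(X^\perp)$ plus an equipartition error; rather (see Proposition~\ref{prop2}) it is $\delta^2V(X)$ plus the geometric defect $\sum_j m_j\int_{\Gamma_j}\bigl(\langle\nabla_{\vec n_j}X,\vec n_j\rangle^2+R(X,\vec n_j,X,\vec n_j)\bigr)$, which does not vanish automatically for normal $X$. Both Le and this paper kill it by a careful choice of extension (here, the Appendix lemma), not by equipartition.

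For the second assertion your proposed route---lifting Jacobi eigenfunctions via $\psi_i(\pi(x))\mathbb{H}'(d(x)/\e)$---is \emph{not} what Le does, nor what this paper does in Section~\ref{sec:thma}. Both use inner variations: the competitor functions are $\langle\nabla u_{\e_k},\tilde X_i\rangle$ for suitable extensions $\tilde X_i$ of the $\psi_i\vec n$, and one compares Rayleigh quotients via Propositions~\ref{prop1} and~\ref{prop2}. Your ansatz requires genuine single-layer asymptotics $u_\e\approx\mathbb{H}(d/\e)$, which is substantially stronger input than the varifold convergence actually available; the inner-variation route avoids this entirely because the test functions are built from $\nabla u_\e$ itself. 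Relatedly, your explanation of the role of $L^1$-local minimality (ruling out multiplicity, forcing a clean profile) is not how it functions in \cite{LeEigen}: equipartition already holds for general bounded-energy solutions by Hutchinson--Tonegawa, and Le's argument does not rely on sharp layer expansions.
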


A similar result was obtained recently by F. Hiesmayr \cite{Fritz} under different conditions. Namely he assumes the limit interface is two-sided and obtains the same conclusion about its Morse index and the eigenvalues of the stability operator. Hiesmayr's proof follows a completely different strategy and relies on an inductive argument which is based on Tonegawa's work \cite{Tonegawa} and on $L^2$ bounds for the second fundamental form of the level sets of the solutions to the Allen-Cahn equation. These $L^2$ bounds can be used to prove curvature estimates for solutions to \eqref{eq:ac} in surfaces which resemble the estimates on the second fundamental form of stable minimal hypersurfaces obtained by Schoen and Simon \cite{SchoenSimon}, see \cite{WangWei} and \cite{Mantoulidis1}. 

In this article we prove

\begin{theorema} \label{thm:a}
Let $M$ be closed Riemannian manifold of dimension $n\geq 3$, and $\{u_{\e_k}\}$ a sequence of solutions to \eqref{eq:ac} with $\e=\e_k \downarrow 0$. Assume that there are positive constants $c_0$ and $E_0$, and a nonnegative integer $p$ such that
	\[\limsup_k \sup_M|u_{\e_k}| \leq c_0, \quad \limsup_k E_{\e_k}(u_{\e_k}) \leq E_0 \quad \mbox{and} \quad \limsup_k m(u_{\e_k}) \leq p. \] 
Then after perhaps passing to a subsequence, for all open subsets $U \subset \! \subset M \setminus \sing \Gamma$, the eigenvalues $\{\lambda_\ell^{\e_k}(U)\}_{\ell}$ of the linearized Allen-Cahn operator at $u_{\e_\ell}|_U$, that is $L_k=-\e_k\Delta + W''(u_{\e_k})/\e_k$, and the eigenvalues $\{\lambda_\ell(U)\}_\ell$ of the Jacobi operator of the corresponding limit interface on $U$, i.e. $LX=-\Delta^\perp X + (\Ric(X,X)+|A_\Gamma|^2|X|^2)$ acting on compactly supported normal vector fields $X$ on $(\Gamma \setminus \sing \Gamma) \cap U$, satisfy
	\[ \limsup_{k} \frac{\lambda_\ell^{\e_k}(U)}{\e_k} \leq \lambda_\ell(U) \]
for all $\ell$. In particular, the regular part of $\Gamma$ has Morse index at most $p$.
\end{theorema}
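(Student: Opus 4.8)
The plan is to follow the "second inner variation" strategy pioneered by Le, but organized so that the two restrictive hypotheses (A)/(B) of his theorem are never used. The core object is the second inner variation $\delta^2 E_{\e}(u_\e)[Y]$ obtained by differentiating twice the functional $t \mapsto E_\e(u_\e \circ \phi_t)$, where $\phi_t$ is the flow of a compactly supported vector field $Y$ on $U \subset\!\subset M \setminus \sing\Gamma$. A direct computation (integration by parts, using that $u_\e$ solves \eqref{eq:ac}) should express this quantity purely in terms of the energy measures $d e_\e$, the "discrepancy" $\xi_\e = \e|\nabla u_\e|^2/2 - W(u_\e)/\e$, and the vector-valued measures $T_\e = \e\,\nabla u_\e \otimes \nabla u_\e\, d\mathcal{H}^n$; schematically $\delta^2 E_\e(u_\e)[Y] = \int (\text{quadratic in } \nabla Y) \, d e_\e - \int (\text{quadratic in } \nabla Y) \, \langle T_\e,\cdot\rangle + (\text{lower-order terms involving } \xi_\e, \Ric)$. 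The first step is thus to carry out this computation carefully and record the precise formula.

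Next I would invoke the refined convergence theorem (Theorem \ref{thm:conv1}, cited in the excerpt): along a subsequence, $\e_k^{-1} d e_{\e_k} \rightharpoonup 2\sigma \sum_j m_j \,\mathcal{H}^{n-1}\lfloor \Gamma_j$ as Radon measures, the discrepancy measures $\e_k^{-1}|\xi_{\e_k}|\,d\mathcal{H}^n \to 0$ (this is the genuine equipartition that holds in the limit, by Tonegawa–Wickramasekera, regardless of multiplicity), and crucially $\e_k^{-1} T_{\e_k} \rightharpoonup 2\sigma\sum_j m_j\, (g - \nu_j\otimes\nu_j)\,\mathcal{H}^{n-1}\lfloor\Gamma_j$, i.e. the tangential projection onto $\Gamma$. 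Feeding these into the formula for the second inner variation, the terms combine — using $e_\e \approx |T_\e|$ up to discrepancy — so that $\e_k^{-1}\delta^2 E_{\e_k}(u_{\e_k})[Y_k]$ converges to $2\sigma\sum_j m_j\, Q_{\Gamma_j}(X,X)$, where $Q_\Gamma$ is the second variation (Jacobi) quadratic form and $X = Y^\perp$ is the normal component of $Y$ along $\Gamma$. The point is that only $Y^\perp$ survives in the limit, which is exactly why no orientability is needed: $Y$ is a globally defined vector field on $M$, so $X$ is an honest (possibly "non-orientable", i.e. section of the normal bundle twisted or not) normal field, and the multiplicity $m_j$ appears as a harmless positive constant that does not affect the sign of the quadratic form.

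The eigenvalue comparison then follows from the variational (min–max) characterization of eigenvalues. Given the first $\ell$ Jacobi eigenfunctions $X_1, \ldots, X_\ell$ on $(\Gamma\setminus\sing\Gamma)\cap U$, one must build test functions $u_{\e_k}\circ\phi_t$ whose second inner variations approximate $Q_\Gamma(X_i,X_i)$ while remaining "$L^2$-orthonormal" in the appropriate sense; here the natural inner product on the Allen-Cahn side is $\langle v,w\rangle_{\e} = \int_M v\, w \, |\nabla u_\e|\cdot(\text{weight})$ — more precisely one must match the quadratic form $\int v L_k v$ with the normalization $\int v^2 \,de_\e$, and the inner variation produces $v = \nabla u_\e \cdot Y$, for which $\int v^2\, de_\e \to 2\sigma\sum_j m_j \int |X_j|^2$. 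Extending the $X_i$ to vector fields on $M$, controlling cross terms $\langle \nabla u_{\e_k}\cdot Y_i, \nabla u_{\e_k}\cdot Y_j\rangle$ via the $T_{\e_k}$ convergence (cross terms $\to 2\sigma\sum_m m_j\int\langle X_i,X_j\rangle = 0$ for $i\neq j$), and applying Courant–Fischer to $L_k/\e_k$ yields $\limsup_k \lambda_\ell^{\e_k}(U)/\e_k \le \lambda_\ell(U)$. Taking $\lambda_{p+1}(U) > 0$ if the index exceeded $p$ and letting $U \uparrow M\setminus\sing\Gamma$ gives the Morse index bound.

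The main obstacle I anticipate is making the convergence $\e_k^{-1}\delta^2 E_{\e_k}(u_{\e_k})[Y_k] \to 2\sigma\sum_j m_j Q_{\Gamma_j}(X,X)$ genuinely uniform/robust in the presence of higher multiplicity and two-sidedness failure: one does not have $C^1$ convergence of the level sets to $\Gamma$ (the solutions may look like several nearby sheets), so one cannot naively push test functions forward. The resolution is to work entirely at the level of the varifold/measure convergence — never using pointwise behavior of $u_{\e_k}$ — and to exploit that the quadratic form depends on $Y$ only through $\nabla Y$ contracted against $de_\e$ and $T_\e$, both of which converge as measures. A secondary technical point is the behavior near $\sing\Gamma$: since $U \subset\!\subset M\setminus\sing\Gamma$ and $\mathcal{H}^{n-1}$-a.e. of $\Gamma$ is regular there, the Jacobi operator is well-defined with discrete spectrum on relatively compact pieces, and one exhausts $M\setminus\sing\Gamma$ at the end; the capacity of $\sing\Gamma$ (dimension $\le n-8$) ensures no eigenvalue is lost in the exhaustion.
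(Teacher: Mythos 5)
Your overall strategy coincides with the paper's: compute the second inner variation, pass to the limit using the convergence of the measures $de_{\e}$ and $\e\,\nabla u_\e\otimes\nabla u_\e\,d\mathcal{H}^n$ (which the paper extracts from the varifold convergence, Proposition \ref{prop1}), and compare eigenvalues by Courant--Fischer. However, the two places where the actual content of the theorem lives are, respectively, glossed over and handled incorrectly. First, the assertion that $\delta^2E_{\e_k}(u_{\e_k})[Y]$ converges to $2\sigma\sum_j m_j\,Q_{\Gamma_j}(Y^\perp,Y^\perp)$ for an arbitrary extension $Y$ of a normal field $X$ is false. The actual limit (Proposition \ref{prop2}) is $2\sigma\bigl(\delta^2V(Y)+\sum_jm_j\int_{\Gamma_j}\{\langle\nabla_{\vec n_j}Y,\vec n_j\rangle^2+R(Y,\vec n_j,Y,\vec n_j)\}\bigr)$; the term $\langle\nabla_{\vec n_j}Y,\vec n_j\rangle^2$ depends on the transverse $1$-jet of $Y$ along $\Gamma$, which is not determined by $Y^\perp|_\Gamma$, and it is \emph{nonnegative}, i.e.\ it spoils precisely the upper bound you want. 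One must construct extensions with $\langle\nabla_{\vec n_j}\tilde X,\vec n_j\rangle\equiv0$ along $\Gamma$ (the paper's Appendix does this via parallel transport along normal geodesics); the curvature term then vanishes because $\tilde X$ is proportional to $\vec n_j$ on $\Gamma$. Relatedly, $\e\,\nabla u_\e\otimes\nabla u_\e\,d\mathcal{H}^n$ converges to the \emph{normal} projection $2\sigma\sum_jm_j\,\vec n_j\otimes\vec n_j\,\mathcal{H}^{n-1}\lfloor\Gamma_j$, not the tangential one, and no extra factor $\e_k^{-1}$ belongs in front of $de_{\e_k}$, $T_{\e_k}$ or $\delta^2E_{\e_k}$: these are already $O(1)$ quantities.

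Second, and this is the point that separates Theorem \ref{thm:a} from Le's result, your treatment of multiplicities does not close. The limiting inner product is the weighted one: $\e_k\int\langle\nabla u_{\e_k},Y_i\rangle\langle\nabla u_{\e_k},Y_{i'}\rangle\to2\sigma\sum_jm_j\int_{\Gamma_j}\langle X_i,X_{i'}\rangle$, and this does \emph{not} vanish for $i\neq i'$ when the $X_i$ are eigenfunctions of the ordinary (unweighted) Jacobi operator and the $m_j$ differ across components --- exactly the situation excluded by Le's hypotheses (A)/(B). The missing ingredient is Hiesmayr's observation, which the paper adopts as \eqref{eq:weight}: $\lambda_\ell(U)$ admits a min--max characterization with respect to the weighted norm $\|X\|^2_{L^2(V)}=\sum_jm_j\int_{\Gamma_j}|X|^2$, because rescaling a test field by $m_j^{-1/2}$ on each component turns the weighted Rayleigh quotient into the unweighted one without changing the dimension of the test subspace. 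With that characterization (plus the injectivity of $a\cdot X\mapsto\langle\nabla u_{\e_k},a\cdot\tilde X\rangle$ for large $k$, which again follows from Proposition \ref{prop1}), the comparison goes through; without it your argument only recovers the multiplicity-one case. Your remark that the $m_j$ ``do not affect the sign'' suffices for the index count but not for the eigenvalue inequality $\limsup_k\lambda^{\e_k}_\ell(U)/\e_k\leq\lambda_\ell(U)$ that the theorem actually asserts.
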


This theorem extends Le's and Hiesmayr's results in the sense that it is not necessary to assume any additional hypothesis on the limit interface. These bounds can be compared to the recent Morse index bounds obtained by Marques and Neves \cite{MarquesNevesIndex} in the context of Almgren-Pitts min-max theory for minimal hypersurfaces. 

The existence of solutions to the Allen-Cahn equation satisfying the hypothesis of Theorem \ref{thm:a} can be proven via topological methods, as in \cite{Smith}, and also by variational techniques, such as the multiparameter min-max construction of Guaraco and the author in \cite{GasparGuaraco}. In this latter strategy, our result shows that one may bound the Morse index of the corresponding limit interfaces from above by the number of parameters employed in the construction of solutions to \eqref{eq:ac}. This is similar in spirit to \cite{MarquesNevesIndex}. Moreover, this min-max construction for phase transitions was inspired by Marques-Neves' proof of Yau's conjecture on the existence of infinitely many minimal hypersurfaces, in the case of closed manifolds of positive Ricci curvature, via Almgren-Pitts min-max theory \cite{MarquesNevesInfinite}. Hence the present work can be seen as another instance of the analogy between phase transitions and minimal hypersurfaces within the framework of min-max techniques. 

Our strategy to prove Theorem \ref{thm:a} follows essentially the ideas of \cite{LeInner,LeEigen} by replacing the convergence of $\e\nabla u_\e \otimes \nabla u_\e \, d\mathcal{H}^n$ by the \emph{varifold convergence} of the solutions, and by observing -- as pointed by Hiesmayr \cite{Fritz} -- that the eigenvalues for the Jacobi operator of the limit interface have a variational characterization which allows for multiplicities as weights of the different components of $\Gamma$.

\subsection*{Outline of the paper.} In Section \ref{sec:conv} we briefly recall the convergence results of solutions of \eqref{eq:ac} to limit interfaces in the sense of varifolds and explain how can we use this convergence to describe the limit of the terms which arise in the second inner variation formula. In Section \ref{sec:2var} we re-derive Le's second inner variation formula for the Allen-Cahn energy in Riemannian manifolds, and use the results of Section \ref{sec:conv} to compare the second variations of energy and area of the limit interface. In Section \ref{sec:thma} we prove our main result. Finally the \hyperref[app:ext]{Appendix} contains an extension result about normal vectors fields which allows us to compare the aforementioned variations. 

\subsection*{Acknowledgements.} This work is partially based on the my Ph.D. thesis at IMPA -- Brazil. I would like to thank my advisor Fernando Cod\'a Marques for his constant encouragement and support. This work was carried out while visiting the Mathematics Department of Princeton University during 2017--18. I'm grateful to this institution for its kind hospitality and its support.

\section{Convergence of diffuse measures and varifolds} \label{sec:conv}

A crucial step in Le's proof of the Morse index bounds for the limit interface is the convergence of the vector measures $\lambda_\e = \e \nabla u_\e \otimes \nabla u_\e \,d\mathcal{H}^n$ to a $(n-1)$-dimensional measure concentrated along the limit interface $\Gamma$. Using the theory of Reshetnyak \cite{Reshetnyak} Le obtains this convergence assuming the regularity of $\Gamma$ and either that it has multiplicity 1, or that it is connected and the solutions $\{u_\e\}$ satisfy the equipartition of energy. The latter condition appeared on Hutchinson and Tonegawa's work and it holds for fairly general families of solutions to the Allen-Cahn equation, namely uniformly bounded solutions with bounded energy. We show in this section that for such solutions the convergence of the vector measures $\lambda_\e$ is a consequence of the \emph{varifold} convergence obtained in \cite{HutchinsonTonegawa,Tonegawa,TonegawaWickramasekera,Guaraco},

We begin by recalling these results. For definitions, notation and basic properties regarding the notion of varifolds we refer the reader to \cite{Simon} or \cite[Section 2]{HutchinsonTonegawa}. This notion was employed first in the context of phase transitions for the parabolic setting by Ilmanen in \cite{Ilmanen} in relation to the mean curvature flow. 

From now on we will make the follow assumption:

\begin{itemize}
\item [A.\ ] The function $W \in C^3(\R)$ is a nonnegative \emph{double-well potential}, that is, $W\geq 0$ and it has exactly three critical points, two of which are non-degenerated minima at $\pm1$, with ${W(\pm1)=0}$ and $W''(\pm1)>0$, and the third is a local maximum point in $(-1,1)$.
\end{itemize}

Furthermore we denote $\sigma=\int_{-1}^1\sqrt{W/2}$. Given a solution $u_\e \in C^1(M)$ to \eqref{eq:ac} we can define an \emph{associated $(n-1)$-varifold} $V^\e$ in $M$ by
	\[V^\e(\phi) = \int_{M \cap \{\nabla u_\e \neq 0\}}\e\frac{|\nabla u_\e(x)|^2}{2}\phi(x,T_x\{u_\e=u_\e(x)\})\,d\mathcal{H}^{n-1}(x)\]
for all compactly supported continuous functions $\phi:G_{n-1}(M)\to \R$. Note that
	\[||V^\e||(A) = \int_{A \cap \{\nabla u_\e \neq 0\}} \e\frac{|\nabla u_\e|^2}{2}\,d\mathcal{H}^n, \quad \mbox{for all Borel sets} \quad A \subset M.\]
In particular we have $||V^\e||(M) \leq E_\e(u_\e)$. 

The convergence of solutions of \eqref{eq:ac} to minimal hypersurface is described in terms of varifolds by the following

\begin{teo}[\cite{HutchinsonTonegawa, Tonegawa, TonegawaWickramasekera, Guaraco}] \label{thm:conv1}
Let $M^n$ be closed Riemannian manifold of dimension $n\geq 3$ and let $\{u_{\e_k}\}$ be a sequence of solutions to \eqref{eq:ac} in M with $\e=\e_k \downarrow 0$. Assume that there are positive constants $c_0$ and $E_0$, and a nonnegative integer $p$ such that
	\begin{equation} \label{hyp:conv}
		\limsup_k \sup_M|u_{\e_k}| \leq c_0, \quad \limsup_k E_{\e_k}(u_{\e_k}) \leq E_0 \quad \mbox{and} \quad \limsup_k m(u_{\e_k}) \leq p.
	\end{equation}
Then after perhaps passing to a subsequence the varifolds $V^k=V^{\e_k}$ converge to a $(n-1)$-rectifiable varifold $V$ in $M$, and we have
	\begin{enumerate}
		\item[(i)] $\sigma^{-1}V$ is a stationary integral varifold.
		\item[(ii)] $||V||(\phi) = \lim_k \int_M \e_k \frac{|\nabla u_{\e_k}|^2}{2}\phi \,d\mathcal{H}^n = \lim_k \int_M \frac{W(u_{\e_k})}{\e_k}\phi\,d\mathcal{H}^n.$
		\item[(iii)] The support $\Gamma$ of $V$ is an embedded minimal hypersurface outside of a singular $\sing V \subset \Gamma$ set of Hausdorff dimension at most $n-8$.
	\end{enumerate}
\end{teo}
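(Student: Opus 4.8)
The plan is to assemble the statement from three analytic pillars, following \cite{HutchinsonTonegawa, Tonegawa, TonegawaWickramasekera, Guaraco}, the first two of which yield (i) and (ii) and the rectifiability of $V$, while the index hypothesis is needed only for the regularity in (iii). The starting point is compactness: the energy bound $\limsup_k E_{\e_k}(u_{\e_k}) \leq E_0$ gives $\sup_k \|V^k\|(M) \leq E_0$, so by the compactness of Radon measures and of varifolds of uniformly bounded mass one may pass to a subsequence along which the diffuse energy measures $\mu_k = de_{\e_k}$ converge weakly-$*$ to a limit measure $\mu$ and $V^k \rightharpoonup V$ as varifolds. The sup-norm bound, together with the maximum principle, confines the solutions to a fixed compact range, which is what allows one to control the potential term uniformly.

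The second pillar is the stress-energy analysis, which gives (ii) and the stationarity half of (i). First I would record that the Allen--Cahn equation makes the associated stress-energy tensor $T_{\e} = de_{\e}\, g - \e\, du_\e \otimes du_\e$ essentially divergence-free, so that testing the first-variation identity for $\|V^\e\|$ against a vector field $X$ leaves only a controlled term involving $\Div T_\e$. The decisive input is the vanishing of the discrepancy: the measures $(\e_k|\nabla u_{\e_k}|^2/2 - W(u_{\e_k})/\e_k)\,d\mathcal{H}^n$ tend to zero, which is the equipartition of energy established by Hutchinson--Tonegawa through discrepancy estimates for solutions of \eqref{eq:ac} with bounded energy. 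Consequently the Dirichlet and potential parts of the energy share the common limit $\|V\|$, which is (ii), and the first-variation identity passes to the limit to show that $V$ is stationary.

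The third pillar is rectifiability and integrality, the technical heart of Hutchinson--Tonegawa. Using the almost-monotonicity of the rescaled density produced by the first-variation estimate, one shows that the $(n-1)$-density of $\|V\|$ exists and is positive $\mu$-almost everywhere; a blow-up analysis then identifies the tangent objects with integer multiples of the one-dimensional heteroclinic profile connecting $\pm 1$, which carries energy exactly $2\sigma$. Quantization of the density in units of $\sigma$ then yields that $\sigma^{-1}V$ is an integral varifold, completing (i).

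Finally, (iii) is where the index bound enters, and this is the step I expect to be the main obstacle. The observation of Guaraco is that $\limsup_k m(u_{\e_k}) \leq p$ forces stability of $u_{\e_k}$ outside at most $p$ points at which the index concentrates: away from these points the second variation of $E_{\e_k}$ is nonnegative on compactly supported variations, and the resulting stability inequality passes to the limit so that $\sigma^{-1}V$ is a \emph{stable} integral varifold away from a finite set. Wickramasekera's regularity theorem for stable minimal hypersurfaces then shows that the support $\Gamma$ is smooth and embedded outside a closed set $\sing V$ of Hausdorff dimension at most $n-8$, into which the finitely many index-concentration points are absorbed. The real difficulty is precisely this invocation of the stable regularity theory, both in localizing stability away from finitely many points and in deploying Wickramasekera's sheeting and regularity machinery to obtain the sharp codimension-$8$ estimate.
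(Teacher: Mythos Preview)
The paper does not prove this theorem at all: it is stated as a background result attributed to \cite{HutchinsonTonegawa, Tonegawa, TonegawaWickramasekera, Guaraco} and is used as a black box throughout, so there is no ``paper's own proof'' against which to compare your attempt.

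That said, your outline is a faithful high-level sketch of how the cited references assemble the result. The compactness step, the vanishing-discrepancy/equipartition argument and the ensuing stationarity and integrality are exactly the content of Hutchinson--Tonegawa; the index-concentration reduction to local stability is Guaraco's contribution (building on Tonegawa's stable case), and the regularity conclusion then rests on Tonegawa--Wickramasekera and ultimately on Wickramasekera's regularity theory for stable codimension-one integral varifolds. One small clarification: the ``stability away from finitely many points'' step is slightly more delicate than you indicate---one uses that on any open set where all $u_{\e_k}$ are stable the limit varifold is stable there, and a covering/removable-singularity argument handles the finitely many bad points; but the overall architecture you describe is correct.
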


As usual we will denote by $\reg V = \Gamma \setminus \sing V$ the regular part of $V$, that is, the set of points $x \in \Gamma$ such that $U \cap \Gamma$ is smoothly embedded for some neighborhood $U$ of $x$. Note that the item (ii) above shows that the equipartition of energy holds. Moreover from the constancy theorem for stationary varifolds we conclude that there are positive integers $m_1,\ldots, m_N$ such that $V=\sum_{j=1}^N v(\Gamma_j,\sigma m_j)$, where $\Gamma_1,\ldots, \Gamma_N$ are the connected components of $\Gamma$ and $v(K,\theta)$ denotes the rectifiable varifold induced by a $(n-1)$-rectifiable set $K \subset M$ with multiplicity $\theta$. In particular
	\[\lim_k E_{\e_k}(u_{\e_k}) = 2||V||(M) = 2\sigma \sum_{j=1}^N m_j \mathcal{H}^{n-1}(\Gamma_j).\]

\begin{obs} As noted by Hiesmayr in \cite{Fritz}, despite this definition of $V^\e$ differs slightly from the one given by Hutchinson and Tonegawa, the equipartition of energy shows that these definitions give the same limit varifold $V$.
\end{obs}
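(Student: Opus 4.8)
The plan is to exploit the fact that the two competing definitions of the associated varifold share the \emph{same direction field} and differ only through a scalar weight, and then to control this difference by means of the (strong) equipartition of energy. Write $D_\e = \e|\nabla u_\e|^2/2$ and $P_\e = W(u_\e)/\e$ for the Dirichlet and potential energy densities. The varifold $V^\e$ used here is built from the weight $D_\e$, whereas the Hutchinson--Tonegawa varifold $\tilde V^\e$ is built from the total energy density $D_\e + P_\e$; in both cases the Grassmannian component is the same measurable map $x \mapsto T_x\{u_\e = u_\e(x)\} = (\nabla u_\e(x))^\perp$, which is defined on $\{\nabla u_\e \neq 0\}$, and both varifolds integrate over that set, so no separate contribution from the critical set arises. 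Hence for every $\phi \in C_c(G_{n-1}(M))$, setting $\bar\phi_\e(x) := \phi\big(x, (\nabla u_\e(x))^\perp\big)$ on $\{\nabla u_\e\neq 0\}$, one has the elementary identity
\[ V^\e(\phi) - \tfrac12\,\tilde V^\e(\phi) = \tfrac12 \int_{\{\nabla u_\e \neq 0\}} \bar\phi_\e\, \xi_\e \, d\mathcal{H}^n, \]
where $\xi_\e := D_\e - P_\e = \e|\nabla u_\e|^2/2 - W(u_\e)/\e$ is the discrepancy density.

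Next I would estimate the right-hand side. Since $|\bar\phi_\e| \leq \|\phi\|_\infty$ pointwise, the difference is bounded by $\tfrac12\|\phi\|_\infty \int_M |\xi_\e|\, d\mathcal{H}^n$. The key analytic input is the vanishing of the discrepancy in $L^1$, namely $\int_M |\xi_\e|\, d\mathcal{H}^n \to 0$; this is the strong form of the equipartition recorded in item (ii) of Theorem \ref{thm:conv1}, and it is part of the Hutchinson--Tonegawa convergence theory for uniformly bounded solutions of bounded energy (indeed it is used there to establish integrality of the limit). Granting this, the right-hand side above tends to $0$, so $V^\e$ and $\tfrac12\tilde V^\e$ converge to the same limit as varifolds. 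Since the factor $2$ relating $\tilde V^\e$ to $V^\e$ is precisely the universal constant relating the total energy to the Dirichlet energy --- the same $2$ appearing in $\lim_k E_{\e_k}(u_{\e_k}) = 2\|V\|(M)$ --- both conventions yield the same limit varifold $V$ after the understood normalization, together with all of the structural properties asserted in Theorem \ref{thm:conv1}.

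The genuine subtlety, and the reason the weak convergence in item (ii) does not by itself suffice, is that the integrand $\bar\phi_\e$ depends on $\e$ through the direction field $(\nabla u_\e)^\perp$ and is not even defined across $\{\nabla u_\e = 0\}$. Weak-$*$ convergence $\xi_\e\,\mathcal{H}^n \rightharpoonup 0$ only licenses passing to the limit in $\int f\, \xi_\e\, d\mathcal{H}^n$ for a \emph{fixed} $f \in C_c(M)$, which is not our situation; moreover, rewriting the identity shows that knowing the varifold convergence of one of the two varifolds merely rearranges the problem rather than solving it. This is exactly why one must upgrade to the $L^1$ (total-variation) vanishing of $\xi_\e$ and combine it with the uniform bound $|\bar\phi_\e| \leq \|\phi\|_\infty$. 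This uniform-versus-fixed distinction is the main obstacle, and it is the only place where more than the bare weak equipartition is invoked.
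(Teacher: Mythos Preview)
The paper does not prove this remark; it simply records Hiesmayr's observation and moves on. Your argument supplies the missing details and is correct: the two competing varifolds share the same Grassmannian lift $x\mapsto (\nabla u_\e(x))^\perp$ and differ only in their scalar weights, so the difference is bounded by $\|\phi\|_\infty$ times the $L^1$ norm of the discrepancy, and the strong (i.e.\ $L^1$) equipartition of energy---which, as the paper itself notes just before Proposition~\ref{prop1}, is part of the Hutchinson--Tonegawa theory for uniformly bounded solutions with bounded energy---forces this to zero. Your final paragraph correctly isolates why the weak statement in item (ii) of Theorem~\ref{thm:conv1} alone would not suffice: the test function $\bar\phi_\e$ depends on $\e$.

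One small caveat: you take the Hutchinson--Tonegawa weight to be the full energy density $D_\e+P_\e$. Depending on the exact normalization in \cite{HutchinsonTonegawa} the weight may differ (e.g.\ a factor of $\sigma^{-1}$, or a coarea-type density), but your argument adapts verbatim once the correct scalar is inserted, since in every case the difference from $D_\e$ is controlled by $\|\xi_\e\|_{L^1}$. This does not affect the substance of your proof.
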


Using the convergence of the associated varifolds $V^{\e_k}$ we may obtain the following result. We will denote hereafter $\nu_\e=\nabla u_\e/|\nabla u_\e|$, which is well defined on the full $\mathcal{H}^n$-measure open subset of $M$ where $\nabla u_\e$ does not vanish.

\begin{prop} \label{prop1}
Let $\{u_{\e_k}\}$ be a sequence of solutions to \eqref{eq:ac} satisfying the hypothesis \eqref{hyp:conv}. After possibly passing to a subsequence it holds
	\[\int_{M} \e_k T(\nabla u_{\e_k}, \nabla u_{\e_k})\, d\mathcal{H}^n \to 2\sigma\,\sum_{j=1}^N m_j\int_{\Gamma_j} T(\vec n_j, \vec n_j) \, d\mathcal{H}^{n-1}\]
for any $(0,2)$-tensor $T$ on $M$. Here $\vec n_j$ denotes a measurable choice of an unit normal vector field defined a.e. on $\Gamma$.
\end{prop}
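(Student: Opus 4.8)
The plan is to reduce the convergence of the quadratic-form integrals to the varifold convergence from Theorem~\ref{thm:conv1}, using the key structural fact that on the level sets of $u_\e$ the tangent plane and the gradient direction $\nu_\e$ are complementary: $\nabla u_\e \otimes \nabla u_\e = |\nabla u_\e|^2\, \nu_\e \otimes \nu_\e$, and $T_x\{u_\e = u_\e(x)\} = \nu_\e(x)^\perp$. Thus for a continuous $(0,2)$-tensor $T$, writing $T(\nu,\nu)$ as a function on the Grassmannian via $T(\nu,\nu) = \tr T - \tr(T|_{\nu^\perp})$ (or directly as a continuous function of the unoriented line spanned by $\nu$, equivalently of the hyperplane $\nu^\perp$), we get
\[
\int_M \e_k\, T(\nabla u_{\e_k}, \nabla u_{\e_k})\, d\mathcal{H}^n
= \int_M \e_k |\nabla u_{\e_k}|^2\, T(\nu_{\e_k}, \nu_{\e_k})\, d\mathcal{H}^n
= 2\int_{G_{n-1}(M)} \phi_T \, dV^{\e_k},
\]
where $\phi_T(x,S) := T(x)(\nu_S, \nu_S)$ with $\nu_S$ a unit normal to the hyperplane $S$ (well-defined since $T$ is quadratic in $\nu_S$), and $dV^{\e_k}$ is the associated varifold. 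Since $\phi_T$ is continuous and compactly supported on $G_{n-1}(M)$ (as $M$ is closed), the varifold convergence $V^{\e_k} \to V$ gives that this tends to $2\int_{G_{n-1}(M)} \phi_T\, dV$.

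Next I would identify the limit. By Theorem~\ref{thm:conv1} and the constancy theorem, $V = \sum_{j=1}^N v(\Gamma_j, \sigma m_j)$, and over the regular part each $\Gamma_j$ has a.e.-defined tangent hyperplane $T_x\Gamma_j$ with unit normal $\vec n_j(x)$ (the choice of sign being irrelevant since $\phi_T$ is even in the normal). Hence
\[
2\int_{G_{n-1}(M)} \phi_T\, dV = 2\sigma \sum_{j=1}^N m_j \int_{\Gamma_j} T(\vec n_j, \vec n_j)\, d\mathcal{H}^{n-1},
\]
which is exactly the claimed limit. The singular set $\sing V$ has Hausdorff dimension at most $n-8$, hence $\mathcal{H}^{n-1}$-measure zero, so it contributes nothing to the integral; and the varifold $V$ is supported on $\overline{\Gamma}$ with density $\sigma m_j$ on $\Gamma_j$, so there is no hidden mass elsewhere.

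The main obstacle — really the only nontrivial point — is making sure that $\phi_T$ is a legitimate test function for the varifold convergence, i.e.\ that $T(\nu_S,\nu_S)$ descends to a well-defined continuous function on the Grassmann bundle $G_{n-1}(M)$ of unoriented hyperplanes. This holds precisely because $T$ enters quadratically in the normal direction: the two unit normals $\pm\nu_S$ to a hyperplane $S$ give the same value $T(\nu_S,\nu_S)$, and the map $S \mapsto \nu_S^{\otimes 2}$ (a section of $\mathrm{Sym}^2 TM$ over $G_{n-1}(M)$) is continuous. One should also check the bookkeeping factor of $2$: the associated varifold is defined with weight $\e|\nabla u_\e|^2/2$, so $\int_M \e|\nabla u_\e|^2 \psi\, d\mathcal{H}^n = 2\,V^\e(\tilde\psi)$ where $\tilde\psi$ is the pullback of $\psi$ to $G_{n-1}(M)$, and by item (ii) of Theorem~\ref{thm:conv1} the limit mass $\|V\|$ equals $\sigma \sum_j m_j \mathcal{H}^{n-1}\llcorner\Gamma_j$, which produces the stated constant $2\sigma m_j$. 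Everything else is a direct substitution, and passing to a further subsequence is harmless since it is already built into Theorem~\ref{thm:conv1}.
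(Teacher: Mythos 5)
Your proposal is correct and follows essentially the same route as the paper: both arguments convert $T(\nu_{\e},\nu_{\e})$ into a continuous test function $\phi_T$ on the Grassmann bundle $G_{n-1}(M)$ (the paper writes it as $\tr_g T - \tr_g(S^*T)$, which is exactly your $\tr T - \tr(T|_{\nu^\perp})$), and then apply the varifold convergence of Theorem \ref{thm:conv1} together with the identification $V=\sum_j v(\Gamma_j,\sigma m_j)$ from the constancy theorem. Your bookkeeping of the factor $2$ and the observation that $\phi_T$ is even in the normal match the paper's computation.
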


\begin{proof}
Define $\phi_T:G_{n-1}(M) \to \R$ by
	\[\phi_T(x,S) = \tr_g(T|_x) - \tr_g(S^*T|_x),\]
for $x \in M$ and $S \in G_{n-1}(T_xM)$, where we identify the subspace $S$ with the corresponding orthogonal projection $T_xM \to S$. The function $\phi_T$ is smooth and hence Theorem \ref{thm:conv1} gives (after possibly passing to a subsequence) $V^{\e_k}(\phi_T) \to V(\phi_T)$. On the other hand for every $\e=\e_k$ we have
	\[V^\e(\phi_T) = \int_{M} \frac{\e|\nabla u_\e|_x|^2}{2} \phi_T(x,T_x\{u_\e=u_\e(x)\})\,d\mathcal{H}^n(x),\]
and for every $x \in \{\nabla u_\e \neq 0\}$ if we pick an orthonormal basis $\{e_1, e_2,\ldots, e_n=\nu_\e\}$ of $T_xM$ then
	\[\phi_T(x,T_x\{u_\e=u_\e(x)\}) \! = \! \sum_{i=1}^n \left( T(e_i,e_i) - T(e_i-\left\langle e_i,\nu_\e\right\rangle\nu_\e, e_i-\left\langle e_i,\nu_\e\right\rangle\nu_\e) \right) \! = \! T(\nu_\e,\nu_\e).\]
Hence
	\[V^\e(\phi_T) = \int_{M \cap \{\nabla u_\e \neq 0\}} \frac{\e|\nabla u_\e|^2}{2} T(\nu_\e,\nu_\e)\,d\mathcal{H}^n = \frac{\e}{2} \int_M T(\nabla u_\e,\nabla u_\e) \, d\mathcal{H}^{n}.\]
Similarly, we have
	\[V(\phi_T) = \sigma \sum_{j=1}^N m_j \int_{\Gamma_j} \phi_T(x, T_x\Gamma_j) \, d\mathcal{H}^{n-1}(x) = \sigma \sum_{j=1}^N m_j \int_{\Gamma_j} T(\vec n_j, \vec n_j) \, d\mathcal{H}^{n-1}(x).\]
This concludes the proof.
\end{proof}

\section{A second inner variation formula in Riemannian manifolds} \label{sec:2var}

In this section we re-derive the second inner variation formula for the Allen-Cahn energy $E_\e$ obtained by Le, see \cite[(2.2)]{LeInner}. 

We recall the definition of \emph{inner variations}. Given a smooth vector field $X$ on $M$ we denote by $\Phi^t$ its time $t$ flow, for sufficiently small $t$. For $u \in H^1(M)$ we write $u^t := u \circ (\Phi^t)^{-1} = u \circ \Phi^{-t}$. The \emph{first inner variation}, respectively the \emph{second inner variation}, of $E_\e$ at $u$ with respect to the vector field $X$ are defined by
	\[\delta E_\e(u,X) = \dfrac{d}{dt}\bigg|_{t=0} E_\e(u^t) \quad \mbox{and} \quad \delta^2 E_\e(u,X) = \frac{d^2}{dt^2}\bigg|_{t=0} E_\e(u^t),\]
respectively. Observe that
	\[\frac{d}{dt}\bigg|_{t=0} u^t(x) = du_x(-X|_x) = -\left\langle\nabla u,X\right\rangle_x \quad \mbox{for all} \quad x \in M,\]
hence we may express $\delta E_\e(u,X)$ in terms of the first derivative of $E_\e$ as
	\[ \delta E_\e(u,X) = E_\e'(u)(-\left\langle\nabla u,X\right\rangle).\]
Moreover, if $u$ is a critical point of $E_\e$, then we have also
	\begin{equation} \label{eq:2var} \delta^2 E_\e(u,X) = E_\e''(u)\left(\left\langle\nabla u,X\right\rangle,\left\langle\nabla u,X\right\rangle\right).
	\end{equation}
In light of the analogy between solutions of the Allen-Cahn equation and minimal hypersurfaces, inner variations may be regarded as \emph{geometric} variations of the energy functional in contrast with \emph{analytic} variations by arbitrary functions in $H^1(M)$. By that we mean these inner variations arise in a similar fashion of variations of the area functional, and hence it is reasonable to expect that variations which decrease area of limit interfaces also decrease $E_\e$ for sufficiently small $\e$. The second inner variation formula shows that this is the case, provided the corresponding vector field satisfies some additional conditions.

Using the change of variables formula for $y=\Phi^t(x)$ one writes
	\begin{align*}
		E_\e(u^t) & = \int_M \left( \frac{\e \left|\nabla u^t|_y\right|^2}{2} + \frac{W(u^t(y))}{\e} \right)\, d\mathcal{H}^{n}(y)\\
		& = \int_M \left( \frac{\e\left|\nabla(u \circ \Phi^{-t})|_{\Phi^t(x)}\right|^2}{2} + \frac{W(u(x))}{\e} \right) |J\Phi^t(x)|\,d\mathcal{H}^n(x),
	\end{align*}
where $|J\Phi^t|$ is the Jacobian determinant of $\Phi^t$. If $\{e_j^t\}_{j=1}^n$ is an orthonormal basis for $T_{\Phi^t(x)}M$ then it holds
	\begin{align*}
	\left|\nabla (u \circ \Phi^{-t})|_{\Phi^t(x)}\right|^2 & = \sum_{j=1}^n \left(\left(du|_x \circ d\Phi^{-t}|_{\Phi^t(x)}\right)(e_j^t) \right)^2\\
		& = \sum_{j=1}^n du_x(v_j^t)^2 = \sum_{j=1}^n g^t\left( \nabla^{g^t}u|_x, v_j^t\right)^2,
	\end{align*}
where $v_j^t = d\Phi^{-t}|_{\Phi^t(x)}e_i^t$ and $g^t$ is the pullback metric of $g$ by $\Phi^t$, that is $g^t=(\Phi^t)^*g$. Since $g^t(v_j^t,v_k^t) = g(e_i^t,e_j^t) = \delta_{jk}$ we get
	\[\left|\nabla (u \circ \Phi^{-t})|_{\Phi^t(x)}\right|^2 = \left|\nabla^{g^t} u|_x\right|_{g_t}^2 = g^t_{ij}(g^t)^{ia} (g^t)^{jb}(\partial_a u)(\partial_b u),\]
where we sum over repeated indexes and use local coordinates.

The next lemma gives the first and second variation formulas for the metric $g^t$ and its inverse. Here $R$ is the $(0,4)$ curvature tensor of $(M,g)$.

\begin{lema} \label{lem:2var}
Let $h^t_{ij} := \frac{d}{dt}g^t_{ij}$. In any coordinate system we have
	\begin{enumerate}
		\item[(i)] $h_{ij}^0 = \frac{d}{dt}\big|_{t=0}g^t_{ij} = \left\langle\nabla_{\partial_i}X,\partial_j\right\rangle + \left\langle\partial_i, \nabla_{\partial_j}X\right\rangle,$
		\item[(ii)] $\frac{d}{dt}\big|_{t=0}(g^t)^{ia} = -g^{ik}h_{h\ell}g^{\ell a},$
		\item[(iii)] $\resizebox{0.9\hsize}{!}{$\frac{d}{dt}\big|_{t=0}h^t_{ij} \! = \! \left\langle\nabla_{\partial_i}\nabla_XX, \partial_j\right\rangle + \left\langle\partial_i, \nabla_{\partial_j}\nabla_{X}X\right\rangle + 2\left\langle\nabla_{\partial_i}X,\nabla_{\partial_j}X\right\rangle-2R(X,\partial_i,X,\partial_j)$}$,
		\item[(iv)] $\frac{d^2}{dt^2}\big|_{t=0}(g^t)^{ia} = 2g^{ir}h_{rs}g^{sk}h_{k\ell}g^{\ell a}-g^{ik}g^{\ell a}\left(\frac{d}{dt}\big|_{t=0}h^t_{k\ell}\right).$
	\end{enumerate}
\end{lema}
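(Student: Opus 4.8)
The plan is to express every $t$-derivative of $g^t=(\Phi^t)^*g$ and of its inverse in terms of Lie and covariant derivatives, and then to translate the Lie derivatives into $\nabla$-language using that $\nabla$ is torsion-free; the curvature tensor will enter through exactly one use of the Ricci identity.

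I would start from the basic identity $\frac{d}{dt}(\Phi^t)^*T=(\Phi^t)^*(\mathcal{L}_X T)$, valid for any tensor field $T$. Taking $T=g$ and iterating gives $h^t=\frac{d}{dt}g^t=(\Phi^t)^*(\mathcal{L}_X g)$ and $\frac{d}{dt}h^t=(\Phi^t)^*(\mathcal{L}_X^2 g)$; since $\Phi^0=\mathrm{id}$, in particular $h^0=\mathcal{L}_X g$ and $\frac{d}{dt}\big|_{t=0}h^t=\mathcal{L}_X^2 g$. The standard torsion-free identity $(\mathcal{L}_X g)(Y,Z)=\langle\nabla_Y X,Z\rangle+\langle Y,\nabla_Z X\rangle$ is then exactly (i). For (ii) I would differentiate the pointwise relation $(g^t)^{ia}g^t_{aj}=\delta^i_j$ once, which yields $\frac{d}{dt}(g^t)^{ia}=-(g^t)^{ik}h^t_{k\ell}(g^t)^{\ell a}$, and evaluate at $t=0$. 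For (iv) I would differentiate this relation a second time by the product rule, using (ii) to rewrite $\frac{d}{dt}(g^t)^{ik}$ and $\frac{d}{dt}(g^t)^{\ell a}$; the two resulting triple-contraction terms agree after relabelling indices, producing the coefficient $2$, and the remaining term is $-g^{ik}g^{\ell a}\big(\frac{d}{dt}\big|_{t=0}h^t_{k\ell}\big)$.

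The substantive part is (iii), the computation of $\mathcal{L}_X^2 g=\mathcal{L}_X S$ with $S:=\mathcal{L}_X g$. Using torsion-freeness once more in the form $(\mathcal{L}_X S)(\partial_i,\partial_j)=(\nabla_X S)(\partial_i,\partial_j)+S(\nabla_{\partial_i}X,\partial_j)+S(\partial_i,\nabla_{\partial_j}X)$, I would expand $(\nabla_X S)_{ij}=X^k\nabla_k(\nabla_i X_j+\nabla_j X_i)$ and, in each summand, commute the two covariant derivatives by the Ricci identity, so that $X^k\nabla_k\nabla_i X_j=\nabla_{\partial_i}\big((\nabla_X X)_j\big)-(\nabla_{\partial_i}X^k)(\nabla_k X_j)+(\text{a curvature term})$, and symmetrically with $i\leftrightarrow j$. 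The terms $(\nabla_{\partial_i}X^k)(\nabla_k X_j)=\langle\nabla_{\nabla_{\partial_i}X}X,\partial_j\rangle$ cancel against the corresponding pieces of $S(\nabla_{\partial_i}X,\partial_j)$ and $S(\partial_i,\nabla_{\partial_j}X)$, while the surviving pieces of those two terms combine to $2\langle\nabla_{\partial_i}X,\nabla_{\partial_j}X\rangle$; using the symmetries of $R$ to collapse the two curvature contributions into $-2R(X,\partial_i,X,\partial_j)$ then gives (iii).

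The one place where genuine care is needed is this last step: choosing the sign conventions for the Ricci identity and for the $(0,4)$-tensor $R$ consistently so that the two curvature terms assemble into exactly $-2R(X,\partial_i,X,\partial_j)$, and checking that the first-order-in-$X$ terms cancel and recombine as claimed. (Alternatively, one could obtain (iii) by differentiating $h^t=\mathcal{L}_X g^t$ and invoking the first-variation formula for the Levi-Civita connection of $g^t$; this merely reorganizes the same calculation, with the single Ricci identity again the only nontrivial ingredient.) Parts (i), (ii) and (iv) are otherwise just differentiation of algebraic identities.
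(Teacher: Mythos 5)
Your proof is correct: the identities $h^0=\mathcal{L}_Xg$ and $\frac{d}{dt}\big|_{t=0}h^t=\mathcal{L}_X^2g$, the torsion-free expansion of the Lie derivatives, a single application of the Ricci identity, and differentiation of $(g^t)^{ia}g^t_{aj}=\delta^i_j$ yield exactly (i)--(iv), including the factor $2$ in (iv) from the two symmetric triple contractions. The paper states this lemma without proof, and your Lie-derivative computation is the standard route; the only point of care is the one you already flag, namely fixing the sign convention for the $(0,4)$-tensor $R$ so that the two curvature contributions $R(X,\partial_i,X,\partial_j)+R(X,\partial_j,X,\partial_i)$ (equal by the pair symmetry) assemble into $-2R(X,\partial_i,X,\partial_j)$ as in (iii).
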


Using Lemma \ref{lem:2var} we may determine the derivatives of $|\nabla u^t|^2$. It holds
	\begin{align*}
		\frac{d}{dt}\bigg|_{t=0}\left|\nabla (u \circ \Phi^{-t})|_{\Phi^t(x)}\right|^2 & = h(\nabla u, \nabla u) -2 g_{ij}g^{ik}h_{k\ell}(g^{\ell a}\partial_a u)(g^{jb}\partial_bu) \\
		& =-h(\nabla u,\nabla u) = -2\left\langle\nabla_{\nabla u}X,\nabla u\right\rangle
	\end{align*}
and
	\begin{align*}
		\frac{d^2}{dt^2}&\bigg|_{t=0}\left|\nabla (u \circ \Phi^{-t})|_{\Phi^t(x)}\right|^2\\
		&= \left[ -2R(X,\nabla u,X,\nabla u) + 2\left\langle\nabla_{\nabla u}\nabla_XX, \nabla u\right\rangle + 2|\nabla_{\nabla u}X|^2 \right]\\
		& + 2\left[2g^{sk}h_{ik}h_{js}(g^{ia}\partial_a u)(g^{jb}\partial_b u) + 2R(X,\nabla u,X,\nabla u)\right. \\
		&\qquad \qquad \qquad \quad \left. - 2\left\langle\nabla_{\nabla u}\nabla_XX, \nabla u\right\rangle - 2|\nabla_{\nabla u}X|^2 \right]-2g^{ks}h_{ik}h_{js}(g^{ia}\partial_a u)(g^{jb}\partial_b u)\\
		& = 2R(X,\nabla u,X,\nabla u) -2\left\langle\nabla_{\nabla u}\nabla_XX, \nabla u\right\rangle + 2|du \circ \nabla X|^2 + 4 \left\langle\nabla_{\nabla_{\nabla u}X}X, \nabla u\right\rangle.
	\end{align*}
Moreover, we can obtain the derivatives of $|J\Phi^t|$ as the derivatives of the determinant of the metric $g^t$. More precisely we have
	\[\frac{d}{dt}\bigg|_{t=0}|J\Phi^t| = \tr_g h = 2\Div X\]
and
	\begin{align*}
		\frac{d^2}{dt^2}\bigg|_{t=0}|J\Phi^t| & = \frac{1}{2}\left[\tr_g\left(\tfrac{d}{dt}\big|_{t=0}h^t\right) - \tr_g\left((h_{ij})^2 \right) + \tfrac{1}{2}(\tr_g h)^2 \right]\\
		& = \frac{1}{2}\left[ 2\Div(\nabla_XX)-2\Ric(X,X) +2 \tr_g S_X -|h|^2 + 2(\Div X)^2 \right],
	\end{align*}
where $S_X$ is the $(0,2)$-tensor $S_X(Y_1,Y_2) = \left\langle\nabla_{Y_1}X,\nabla_{Y_2}X\right\rangle$. 

From the calculations above one concludes

\begin{prop}
It holds
	\[\delta E_\e(u,X) = \int_M\left[\left(\frac{\e|\nabla u|^2}{2}+\frac{W(u)}{\e}\right) \Div X - \e \left\langle\nabla_{\nabla u}X,\nabla u\right\rangle\right]\,d\mathcal{H}^n\]
and
	\begin{align} \label{eq:secv}
		\delta^2 E_\e(u,X) & = \int_M \left\{ \Div(\nabla_XX)-\Ric(X,X) + \tr_gS_X - \frac{1}{2}|h_X|^2 +(\Div X)^2 \right\}\,de_\e  \nonumber \\
		& + \e \int_M \left\{T_X(\nabla u,\nabla u) + 2\left\langle\nabla_{\nabla_{\nabla u}X} X, \nabla u\right\rangle - \left\langle\nabla_{\nabla u}\nabla_XX, \nabla u\right\rangle \right.\\
		& \qquad \qquad \qquad \quad \left.- 2\left\langle\nabla_{\nabla u}X,\nabla u\right\rangle\Div X + R(X,\nabla u,X, \nabla u) \right\} \, d\mathcal{H}^n, \nonumber
	\end{align}
where
	\begin{align*}
		S_X(Y_1,Y_2) &= \left\langle \nabla_{Y_1}X,\nabla_{Y_2}X\right\rangle,\\
		h_X(Y_1,Y_2) &= \left\langle \nabla_{Y_1} X, Y_2\right\rangle + \left\langle Y_1,\nabla_{Y_2} X\right\rangle,\\
		T_X(Y_1,Y_2) &= \tr_g \left((Z_1,Z_2) \mapsto \left\langle \nabla_{Z_1}X,Y_1\right\rangle \left\langle\nabla_{Z_2}X,Y_2\right\rangle \right).
	\end{align*}
\end{prop}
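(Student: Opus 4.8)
The plan is to compute $\delta E_\e(u,X)$ and $\delta^2 E_\e(u,X)$ directly by differentiating the change-of-variables expression for $E_\e(u^t)$ under the integral sign and collecting terms at $t=0$. The integrand decomposes into the potential piece $W(u(x))|J\Phi^t(x)|/\e$, which is $x$-wise independent of $t$ except for the Jacobian factor, and the gradient piece $(\e/2)|\nabla^{g^t}u|_x|_{g^t}^2 |J\Phi^t(x)|$, which carries $t$-dependence both in the metric-contracted norm and in the Jacobian. So the first step is to record the two product rules: for the first variation, $\frac{d}{dt}|_{t=0}$ of a product of $|\nabla u^t|^2$ (whose derivative was computed above to be $-2\langle\nabla_{\nabla u}X,\nabla u\rangle$) and $|J\Phi^t|$ (derivative $2\Div X$); for the second variation, the Leibniz expansion with the three terms $f''g + 2f'g' + fg''$, using the second derivatives $\frac{d^2}{dt^2}|_{t=0}|\nabla u^t|^2$ and $\frac{d^2}{dt^2}|_{t=0}|J\Phi^t|$ already derived via Lemma \ref{lem:2var}.

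Concretely, for the first inner variation I would write
\[
\frac{d}{dt}\bigg|_{t=0} E_\e(u^t) = \int_M \left[\left(\frac{\e|\nabla u|^2}{2}+\frac{W(u)}{\e}\right)\cdot 2\Div X + \frac{\e}{2}\cdot\left(-2\langle\nabla_{\nabla u}X,\nabla u\rangle\right)\right]\frac{1}{2}\,d\mathcal{H}^n,
\]
except that one must be careful: the factor of $2$ in $\frac{d}{dt}|_{t=0}|J\Phi^t|=2\Div X$ and in the metric derivative is an artifact of how $g^t_{ij}$ scales, and $|J\Phi^t|=\sqrt{\det(g^t_{ij})/\det(g_{ij})}$ contributes $\frac{1}{2}\tr_g h^0 = \Div X$ after the square root — so the clean statement is $\frac{d}{dt}|_{t=0}|J\Phi^t|=\Div X$ and $\frac{d}{dt}|_{t=0}|\nabla u^t|^2 = -h_X(\nabla u,\nabla u) = -2\langle\nabla_{\nabla u}X,\nabla u\rangle$, which combine to give exactly the stated $\delta E_\e$. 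For the second inner variation I would expand
\[
\frac{d^2}{dt^2}\bigg|_{t=0}\!\!\left[\frac{\e|\nabla u^t|^2}{2}+\frac{W}{\e}\right]\!|J\Phi^t| = \left(\frac{\e|\nabla u|^2}{2}+\frac{W}{\e}\right)\!\frac{d^2}{dt^2}\bigg|_{t=0}\!|J\Phi^t| + 2\cdot\frac{\e}{2}\frac{d}{dt}\bigg|_{t=0}\!|\nabla u^t|^2\cdot\Div X + \frac{\e}{2}\frac{d^2}{dt^2}\bigg|_{t=0}\!|\nabla u^t|^2,
\]
substitute $\frac{d^2}{dt^2}|_{t=0}|J\Phi^t| = \Div(\nabla_XX)-\Ric(X,X)+\tr_g S_X - \frac12|h_X|^2 + (\Div X)^2$ (the corrected square-root version of the formula above), substitute the displayed expansion of $\frac{d^2}{dt^2}|_{t=0}|\nabla u^t|^2$, and recognize $2R(X,\nabla u,X,\nabla u) - 2\langle\nabla_{\nabla u}\nabla_XX,\nabla u\rangle + 2|du\circ\nabla X|^2 + 4\langle\nabla_{\nabla_{\nabla u}X}X,\nabla u\rangle$, noting $|du\circ\nabla X|^2 = T_X(\nabla u,\nabla u)$ by definition of $T_X$. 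Collecting the $de_\e$-terms and the pure $\e\,d\mathcal{H}^n$-terms yields \eqref{eq:secv}.

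The main obstacle is bookkeeping rather than conceptual: tracking the factors of $2$ and the sign conventions consistently between the "naive" derivatives of $g^t_{ij}$ (as in Lemma \ref{lem:2var}) and the quantities that actually appear — $|J\Phi^t| = \sqrt{\det g^t / \det g}$ requires differentiating a square root, so $\frac{d^2}{dt^2}|_{t=0}|J\Phi^t|$ picks up the $-\frac14(\tr_g h)^2$ correction that combines with $\frac14$ of $\frac{d^2}{dt^2}|_{t=0}$ of the pre-square-root determinant; and $|\nabla^{g^t}u|^2 = (g^t)^{ia}(g^t)^{jb}g^t_{ij}(\partial_a u)(\partial_b u)$ involves two inverse-metric factors and one metric factor, so the first-order cancellation $h(\nabla u,\nabla u) - 2g_{ij}g^{ik}h_{k\ell}\cdots = -h(\nabla u,\nabla u)$ and the more delicate second-order cancellations among the $h^2$-type terms must be done carefully. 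One also needs to verify that all derivatives may be passed inside the integral, which is immediate since $X$ is smooth with compact support (or $M$ is closed) and $u\in C^1$, so the integrands and their $t$-derivatives are bounded uniformly for small $t$. Once the two variation formulas are assembled, the identities \eqref{eq:2var} relating $\delta^2 E_\e(u,X)$ to $E_\e''(u)(\langle\nabla u,X\rangle,\langle\nabla u,X\rangle)$ at critical points follow from the chain rule applied twice, using that the first-order term in $t$ vanishes when $u$ is critical.
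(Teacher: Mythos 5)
Your proposal is correct and follows essentially the same route as the paper: differentiate the change-of-variables expression for $E_\e(u^t)$, use Lemma \ref{lem:2var} to get the first and second derivatives of $|\nabla u^t|^2$ and $|J\Phi^t|$ at $t=0$, and combine them by the Leibniz rule, identifying $|du\circ\nabla X|^2$ with $T_X(\nabla u,\nabla u)$. Your remark about the square-root normalization of $|J\Phi^t|=\sqrt{\det g^t/\det g}$ (so that $\tfrac{d}{dt}\big|_{t=0}|J\Phi^t|=\tfrac12\tr_g h=\Div X$) is exactly the bookkeeping needed to make the stated formulas come out, and it correctly flags a factor-of-two slip in the paper's intermediate display.
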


We are now in position to describe the limit of $\delta^2 E_\e(u_\e,X)$ as $\e \downarrow 0$, for solutions $\{u_\e\}$ of \eqref{eq:ac} with uniformly bounded Morse index and energy. Under these conditions, it follows from Theorem \ref{thm:conv1} that, after perhaps passing to a subsequence,
	\begin{align*}
		&\int_M\left\{ \Div(\nabla_XX)-\Ric(X,X) + \tr_gS_X - \frac{1}{2}|h_X|^2 +(\Div X)^2 \right\} \,de_\e \to\\
		&\quad \to  2\sigma \sum_{j=1}^N m_j \int_{\Gamma_j} \left\{ \Div(\nabla_XX) - \Ric(X,X) + \tr_gS_X - \frac{1}{2}|h_X|^2 +(\Div X)^2 \right\} \, d\mathcal{H}^{n-1}
	\end{align*}
as $\e \downarrow 0$. We can use Proposition \ref{prop1} to verify that the second term in the RHS of \eqref{eq:secv} converges to
	\begin{align*}
		2\sigma\sum_{j=1}^Nm_j &\int_{\Gamma_j} \bigg\{T_X(\vec n_j,\vec n_j) + 2\left\langle\nabla_{\nabla_{\vec n_j}X} X, \vec n_j\right\rangle - \left\langle\nabla_{\vec n_j}\nabla_XX, \vec n_j\right\rangle \\
		& \qquad \qquad \qquad  \qquad - 2\left\langle\nabla_{\vec n_j}X,\vec n_j\right\rangle\Div X + R(X,\vec n_j,X, \vec n_j) \bigg\} \, d\mathcal{H}^n.
	\end{align*}
If we write $\Div Y = \Div^{\Gamma_j} Y + \left\langle\nabla_{\vec n_j} Y, \vec n_j\right\rangle$ for $Y=X$ and $Y=\nabla_XX$, and recall that $V$ is stationary, then we can write $\lim_\e\delta^2E_\e(u_\e,X)$ as
	\[ \sum_{j=1}^n m_j \int_{\Gamma_j} \left\{-\Ric(X,X) + (\Div^{\Gamma_j} X)^2 + R(X,\vec n_j,X, \vec n_j) + A_j + B_j \right\}\,d\mathcal{H}^{n-1},  \]
where, choosing an orthonormal basis $\{e_j\}_{j=1}^n$ with $e_n =\vec n_j$ for $T_xM$ and $x \in \Gamma_j\cap \reg V$,
	\[A_j = T_X(\vec n_j,\vec n_j) -\left\langle \nabla_{\vec n_j}X,\vec n_j\right\rangle^2 = \sum_{\ell=1}^{n-1} \left\langle\nabla_{e_\ell}X,\vec n_j\right\rangle^2 = |\nabla^\perp X|^2\]
and
	\begin{align*}
		B_j & = 2\left\langle\nabla_{\nabla_{\vec n_j}X}X,\vec n_j\right\rangle + \tr_g S_X - |h_X|^2/2 \\
		& = 2\sum_{\ell=1}^{n-1} \left\langle\nabla_{e_\ell}X, \vec n_j\right\rangle \left\langle\nabla_{\vec n_j}X, e_\ell\right\rangle + 2\left\langle \nabla_{\vec n_j}X, \vec n_j\right\rangle^2 + \sum_{\ell=1}^n |\nabla_{e\ell} X|^2 \\
		& \qquad - \frac{1}{2} \sum_{\ell,m=1}^n\left( \left\langle\nabla_{e_\ell} X, e_m\right\rangle^2 + 2\left\langle \nabla_{e_\ell}X,e_m\right\rangle \left\langle\nabla_{e_m}X,e_\ell\right\rangle + \left\langle \nabla_{e_m}X,e_\ell\right\rangle^2 \right)\\
		& = \left\langle\nabla_{\vec n_j}X,\vec n_j\right\rangle^2 - \sum_{\ell,m=1}^{n-1} \left\langle\nabla_{e_\ell}X,e_m\right\rangle \left\langle \nabla_{e_m}X, e_\ell \right\rangle.
	\end{align*}
Therefore (see \cite[9.4]{Simon}) we get

\begin{prop} \label{prop2}
Assume $\{u_{\e_k}\}$ is a sequence of solutions of \eqref{eq:ac} which satisfy the uniform bounds \eqref{hyp:conv}. Then there is a (not relabeled) subsequence of $\{u_{\e_k}\}$ for which all of the conclusions of Theorem \ref{thm:conv1} hold, and moreover
	\begin{equation} \label{eq:limitvar}
		\resizebox{0.93\hsize}{!}{$\displaystyle\frac{1}{2\sigma}\lim_k\delta^2 E_{\e_k}(u_{\e_k},X) = \delta^2V(X) + \sum_{j=1}^N m_j \int_{\Gamma_j} \left\{ \left\langle \nabla_{\vec n_j}X,\vec n_j \right\rangle^2 + R(X,\vec n_j,X,\vec n_j) \right\}\,d\mathcal{H}^{n-1}$}
	\end{equation}
for all smooth vector fields $X$ defined on $M$.
\end{prop}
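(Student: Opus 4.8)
The plan is to combine the two convergence facts already assembled in this section into the single identity \eqref{eq:limitvar}. The first ingredient is the varifold convergence $e_{\e_k} \to 2\|V\| = 2\sigma\sum_j m_j \mathcal{H}^{n-1}\restriction \Gamma_j$ from Theorem \ref{thm:conv1}(ii), applied to the first bracket in \eqref{eq:secv}, whose integrand is a fixed continuous (indeed smooth) function on $M$ depending only on $X$ and the ambient metric. The second ingredient is Proposition \ref{prop1}, applied to the $(0,2)$-tensors appearing in the second bracket of \eqref{eq:secv}; each of the terms $T_X(\nabla u,\nabla u)$, $\langle\nabla_{\nabla_{\nabla u}X}X,\nabla u\rangle$, $\langle\nabla_{\nabla u}\nabla_XX,\nabla u\rangle$, $\langle\nabla_{\nabla u}X,\nabla u\rangle\Div X$ and $R(X,\nabla u,X,\nabla u)$ is of the form $T(\nabla u,\nabla u)$ for an appropriate smooth tensor $T$ built from $X$ (the factor $\Div X$ and the curvature tensor being smooth functions/tensors on $M$), so Proposition \ref{prop1} replaces $\e_k\,T(\nabla u_{\e_k},\nabla u_{\e_k})\,d\mathcal{H}^n$ by $2\sigma\sum_j m_j\,T(\vec n_j,\vec n_j)\,d\mathcal{H}^{n-1}$. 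Passing to a common subsequence realizing all these limits simultaneously — together with the full conclusions of Theorem \ref{thm:conv1} — gives the displayed limit formula for $\tfrac{1}{2\sigma}\lim_k \delta^2 E_{\e_k}(u_{\e_k},X)$ as the sum over $j$ of $\int_{\Gamma_j}\{-\Ric(X,X)+(\Div^{\Gamma_j}X)^2+R(X,\vec n_j,X,\vec n_j)+A_j+B_j\}\,d\mathcal{H}^{n-1}$ exactly as written above the proposition.

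Next I would carry out the purely algebraic reduction of $A_j$ and $B_j$ that is already sketched in the text: with an adapted orthonormal frame $\{e_1,\dots,e_{n-1},e_n=\vec n_j\}$ at a point of $\reg V$, one checks $A_j = \sum_{\ell=1}^{n-1}\langle\nabla_{e_\ell}X,\vec n_j\rangle^2 = |\nabla^\perp X|^2$ and, after expanding $\tr_g S_X$, $|h_X|^2$ and using the decomposition $\Div Y = \Div^{\Gamma_j}Y + \langle\nabla_{\vec n_j}Y,\vec n_j\rangle$ for $Y=X$ and $Y=\nabla_X X$ (the latter absorbing the $\langle\nabla_{\vec n_j}\nabla_X X,\vec n_j\rangle$ term against $\Div(\nabla_X X)$ via stationarity of $V$), that $B_j = \langle\nabla_{\vec n_j}X,\vec n_j\rangle^2 - \sum_{\ell,m=1}^{n-1}\langle\nabla_{e_\ell}X,e_m\rangle\langle\nabla_{e_m}X,e_\ell\rangle$. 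Finally I would invoke the second variation formula for a stationary integral varifold, in the form of \cite[9.4]{Simon}: for such $V$ and a smooth ambient vector field $X$,
\[
\delta^2 V(X) = \sum_j m_j\int_{\Gamma_j}\Big\{ |\nabla^\perp X|^2 - \Ric(X,X) + (\Div^{\Gamma_j}X)^2 - \sum_{\ell,m=1}^{n-1}\langle\nabla_{e_\ell}X,e_m\rangle\langle\nabla_{e_m}X,e_\ell\rangle \Big\}\,d\mathcal{H}^{n-1},
\]
which recognizes precisely the sum $A_j + B_j - \Ric(X,X) + (\Div^{\Gamma_j}X)^2$ in our limit. Subtracting this off leaves exactly the residual terms $\langle\nabla_{\vec n_j}X,\vec n_j\rangle^2 + R(X,\vec n_j,X,\vec n_j)$, which is the claimed formula \eqref{eq:limitvar}.

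I expect the main obstacle to be not any single estimate but the careful bookkeeping in the two reductions: matching the Hutchinson–Simon form of $\delta^2 V(X)$ term-by-term with the limit of $\delta^2 E_{\e_k}$ requires being scrupulous about which derivative terms are total divergences on $\Gamma_j$ (and hence vanish after integration against the stationary $V$, since $\sigma^{-1}V$ is stationary and $\Gamma_j$ is minimal), and about the sign conventions in $R$, in $S_X$, $h_X$, $T_X$, and in the tangential divergence decomposition. A secondary subtlety worth a remark is that Proposition \ref{prop1} gives convergence of the integrals only after passing to a subsequence and only against a fixed tensor, so one must fix $X$ first, extract the subsequence from the finitely many tensors arising from that $X$, and note that this subsequence can be chosen once and for all (independently of $X$) because the underlying varifold convergence $V^{\e_k}\to V$ in Theorem \ref{thm:conv1} already holds along a single subsequence; the $X$-dependent identities then follow for every smooth $X$ along that same subsequence. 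Everything else — the change-of-variables computation of $\delta^2 E_\e(u,X)$ and Lemma \ref{lem:2var} — is already established earlier in the excerpt and can be quoted directly.
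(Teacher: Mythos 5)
Your proposal is correct and follows essentially the same route as the paper: the convergence $de_{\e_k} \to 2\|V\|$ from Theorem \ref{thm:conv1}(ii) for the first bracket of \eqref{eq:secv}, Proposition \ref{prop1} for the second bracket, the tangential-divergence decomposition together with stationarity of $V$, the pointwise reduction of $A_j$ and $B_j$ in an adapted frame, and the identification of $\delta^2 V(X)$ via \cite[9.4]{Simon}, with the subsequence fixed once by the varifold convergence and hence independent of $X$. The one point worth noting is that the formula you quote for $\delta^2V(X)$ carries $-\Ric(X,X)$ rather than $-\sum_{\ell=1}^{n-1}R(X,e_\ell,X,e_\ell)$ — which is precisely why $R(X,\vec n_j,X,\vec n_j)$ survives in the error term of \eqref{eq:limitvar} — and since the two conventions agree on normal fields this is exactly the bookkeeping the paper itself uses and is harmless for the application in Section \ref{sec:thma}.
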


Similarly to the Euclidean case, the error term $\lim_\e \delta^2E_\e(u_\e,X)-2\sigma\delta^2V(X)$ is positive provided $X$ is orthogonal to $\Gamma$. We also point out that the formula above holds even when the limit interface is one-sided, and the RHS is defined in terms of local smooth (or merely measurable) choices for $\vec n_j$, since the integrand in the error does not change if we replace $\vec n_j$ by $-\vec n_j$. Finally note that this extra term vanishes whenever $X$ is an extension of a given normal field defined on $\Gamma$ and it satisfies $\left\langle\nabla_{\vec n_j}X,\vec n_j\right\rangle \equiv 0$.

\section{Proof of Theorem \ref{thm:a}} \label{sec:thma}

The proof follows from the arguments given in the proof of \cite[Corollary 1.1]{LeEigen} replacing the corresponding convergence result by Proposition \ref{prop2}, and from the remarks of \cite[\S 3.2]{Fritz}. We include it here for the reader's convenience.

We fix $U \subset \! \subset M \setminus \sing V$. Denote by $Q_\e$ the quadratic form given by the second inner variation of $E_{\e}$, namely
	\[Q_\e(X) = \delta^2E_{\e}(u_{\e},X) = E_\e''(u)\left(\left\langle\nabla u,X\right\rangle,\left\langle\nabla u,X\right\rangle\right)\]
for $H^1$ vector fields $X$ in $M$ (this is well defined due to \eqref{eq:2var}). When $\e=\e_k$ we will write simply $Q_{\e_k}=Q_k$. We consider also the quadratic form $Q_V$ given by the second variation of $V$, that is
	\[Q_V(X) = \delta^2V(X) = \sum_{j=1}^N m_j\int_{\Gamma_j} |\nabla^\perp X|^2 - (\Ric(X,X) + |A_{\Gamma_j}|^2|X|^2)\]
for $H^1_0$ normal vector fields $X$ defined on $\reg V$. By the extension result of the \hyperref[app:ext]{Appendix} all compactly supported smooth normal vector fields $X$ defined on $U \cap \reg V$ admit a compactly supported smooth extension $\tilde X$ to $U$ such that $\left\langle\nabla_{\vec n_j} \tilde X, \vec n_j\right\rangle$ vanishes on $U \cap \Gamma_j$ for all $j$. For such extensions both terms in the integrand in the RHS of \eqref{eq:limitvar} vanish and hence
	\begin{equation} \label{eq:quad}
	 \lim_k Q_k(\tilde X) = 2\sigma\,Q_V(X)
	\end{equation}
whenever $\{u_{\e_k}\}$ satisfies the hypothesis \eqref{hyp:conv}. 

Recall that the eigenvalues $\lambda_\ell^\e(U)$ of the operator $-\e\Delta+W''(u_\e)/\e$ on $U$ have the following variational characterization:
	\[\lambda_\ell^\e(U) = \inf_{\dim S=\ell} \max_{\phi \in S\setminus 0} \frac{E_\e(u_\e)''(\phi,\phi)}{||\phi||^2_{L^2(M)}},\]
where $S$ varies among linear subspaces of $H^1_0(U)$. Similarly the eigenvalues of the Jacobi operator of $U \cap \reg V$ satisfy
	\[\lambda_\ell(U) = \inf_{\dim E=\ell} \max_{X \in E\setminus 0} \frac{\sum_{j=1}^n \int_{\Gamma_j} \left\{|\nabla^\perp X|^2 - (\Ric(X,X)+|A_{\Gamma_j}|^2|X|^2)\right\}}{||X||^2_{L^2(\reg V)}}\]
where $E$ varies among linear subspaces of $H_0^1$ normal vector fields on $U \cap \reg V$. As noted by \cite[\S 3.2]{Fritz} in the case of the scalar second variation of the area, we can describe $\lambda_\ell(U)$ in terms of $Q_V$ by the \emph{weighted} min-max characterization
	\begin{equation} \label{eq:weight}
		\lambda_\ell(U) = \inf_{\dim E=\ell} \max_{X \in E\setminus 0} \frac{Q_V(X)}{||X||^2_{L^2(V)}},
	\end{equation}
where
	\[||X||^2_{L^2(V)} = \int_M |X|^2 \, d||V|| = \sum_{j=1}^N m_j \int_{\Gamma_j}|X|^2\, d\mathcal{H}^{n-1}.\]
In fact given linearly independent $H_0^1$ normal vector fields $X_1,\ldots, X_\ell$ on $U \cap \reg V$ let
	\[\hat X_i := m_j^{-1/2} X_i \quad \mbox{on} \quad \Gamma_j \cap (U\cap \reg V), \quad \mbox{for} \quad i=1,\ldots, \ell.\]
and $\hat X_i = 0$ on $U \cap (\Gamma_j \setminus \reg V)$. Then $\hat X_i$ are also $H_0^1$ normal vector fields on $U \cap \reg V$,
	\[||\hat X_i||_{L^2(V)}^2 = \sum_{j=1}^N\int_{\Gamma_j}|X_i|^2\,d\mathcal{H}^{n-1} = ||X_i||^2_{L^2(\reg V)},\]
	\[Q_V(\hat X_i) = \sum_{j=1}^N\int_{\Gamma_j}\left\{|\nabla^\perp X_i|^2 - (\Ric(X_i,X_i)+|A_{\Gamma_j}|^2|X_i|^2)\right\}\,d\mathcal{H}^{n-1}\]
and $\hat X_1,\ldots, \hat X_\ell$ are linearly independent. This proves \eqref{eq:weight}.

Given $\delta>0$, there is an $\ell$-dimensional linear space $E$ of smooth and compactly supported normal vector fields on $U\cap \reg V$, spanned by say $X_1,\ldots, X_\ell$, such that
	\[\max_{a \in S^{\ell-1}} \frac{Q_V(a \cdot X)}{||a\cdot X||_{L^2(V)}^2} \leq \lambda_\ell(U) + \delta,\]
where we use the notation $a \cdot X = \sum_{i=1}^\ell a_i X_i$ for $a \in \R^\ell$, and similarly for the corresponding extensions $\tilde X_1,\ldots, \tilde X_\ell$ on $U$. The map
	\[a \cdot X \in E \mapsto \left\langle \nabla u_{\e_k}, -a\cdot \tilde X \right\rangle \in H_0^1(U)\]
is injective for sufficiently large $k$, otherwise Proposition \ref{prop1} would give us
	\[\sum_{j=1}^N m_j \int_{\Gamma_j} \left\langle\vec n_j,a\cdot \tilde X\right\rangle^2 \,d\mathcal{H}^{n-1}=0\]
for some $a \in S^{\ell-1}$, which imply $a \cdot X=0$, a contradiction. Hence, the space of all $\langle\nabla u_{\e_k},-a\cdot\tilde X\rangle$ for $a \in \R^\ell$ is an $\ell$-dimensional linear subspace of $H_0^1(U)$ and
	\[\frac{\lambda_\ell^{\e_k}(U)}{\e_k} \leq \max_{a \in S^{\ell-1}} \frac{Q_{k}(a \cdot \tilde X)}{\e\int_M \left\langle \nabla u_{\e_k}, a \cdot \tilde X \right\rangle^2}\]
for large $k$. Choose a (not relabeled) subsequence of $\e_k \downarrow 0$ such that
	\[\lim_k\frac{\lambda_\ell^{\e_k}(U)}{\e_k} = \limsup_\e \frac{\lambda_\ell^\e(U)}{\e} =: \mu_\ell(U).\]
Using Proposition \ref{prop1}, \eqref{eq:quad} and the polarization formula for the quadratic forms $Q_k$ and
	\[q_k(Z):= \e_k \int_M \left\langle \nabla u_{\e_k},Z\right\rangle^2\,d\mathcal{H}^n,\]
 we conclude that, after perhaps passing again to a subsequence, there exists $a \in S^{\ell-1}$ such that 
	\[\mu_\ell(U) -\delta \leq \frac{Q_{k}(a \cdot \tilde X)}{\e_k\int_M \left\langle \nabla u_{\e_k},a\cdot \tilde X\right\rangle^2}\]
for sufficiently large $k$. On the other hand $Q_{k}(a \cdot \tilde X) \to 2\sigma Q_V(a \cdot X)$ and
	\[\e_k\int_M \left\langle\nabla u_{\e_k}, a \cdot \tilde X \right\rangle^2 \to 2\sigma \sum_{j=1}^N m_j \int_{\Gamma_j} \left\langle \vec n_j, a\cdot \tilde X \right\rangle^2\,d\mathcal{H}^{n-1} = 2\sigma||a \cdot X||_{L^2(V)}^2,\]
where we used again Proposition \ref{prop1}. Thus
	\[\mu_\ell(U) -\delta \leq \frac{Q_V(a \cdot X)}{||a \cdot X||_{L^2(V)}^2} \leq \lambda_\ell(U)+\delta.\]
Since $\delta>0$ is arbitrary, this proves that $\mu_\ell(U) \leq \lambda_\ell(U)$. To verify the last statement of Theorem \eqref{thm:a}, recall that the Morse index of the regular part of $\Gamma$, viz. $\reg V$, is given by
	\[\mathrm{ind}(\reg V) = \sup_U\# \{\ell \in \N : \lambda_\ell(U)<0\} \]
for $U \subset \! \subset M \setminus \sing V$ such that $U$ intersects $\reg V$. It follows then from the spectral upper bound $\mu_\ell(U) \leq \lambda_\ell(U)$ and the Morse index bound $\limsup_k m(u_{\e_k}) \leq p$ that $\mathrm{ind}(\reg V) \leq p$. This finishes the proof.
\qed
		
\begin{appendix}
\section*{Appendix}
\label{app:ext}
Let $M$ be a complete Riemannian manifold and let $\Sigma \subset M$ be a hypersurface. Given $\delta>0$ denote
	\[B_\delta(N\Sigma) = \{ v \in TM|_{\Sigma} : v \perp T\Sigma, |v|<\delta\},\]
and let $\pi:B_\delta(N\Sigma) \to \Sigma$ be the projection map. Assume we can find $\delta>0$ such that the \emph{normal exponential map} $F:B_\delta(N\Sigma) \to M$ given by $F(v_x)=\exp_xv$ is a diffeomorphism onto an open set $U_\delta \subset M$. Under these conditions, we have

\begin{lema0}
Every compactly supported smooth normal vector field $X$ defined on $\Sigma$ admits a smooth extension $\tilde X$ to $M$ such that $\nabla_v \tilde X=0$ for all $v \in N\Sigma \subset TM$. Moreover we can assume that $\tilde X$ vanishes outside a neighborhood $U \subset U_\delta$ of $\Sigma$.
\end{lema0}

\begin{proof}
Intuitively we will define $\tilde X$ at some $x \in U_\delta$ via parallel transport along $\gamma:[0,r] \to M$, the unique geodesic which is normal to $\Sigma$ at $\gamma(0) \in \Sigma$ and satisfies $|\gamma'|=1$ and $\gamma(r)=x$. Then, we may use a cutoff function to extend $\tilde X$ to $M$.

To make this idea precise write $p=\pi \circ F^{-1}:U_\delta \to \Sigma$, and for every $x \in U_\delta$ let
	\[\hat X|_x = \dfrac{d}{dt}\bigg|_{t=0} \exp_{p(x)}(F^{-1}(x) + tX|_{p(x)}) = d\exp_{p(x)}(F^{-1}(x))\,(X|_{p(x)}).\]
We have that $\hat X$ is a vector field on $U_\delta$ which is as regular as $X$. One verifies that if $x \in \Sigma$ then $F^{-1}(x)=0 \in T_xM$ and $p(x)=x$, so $\hat X|_x = X|_x$ and $\hat X$ extends $X$ to $U_\delta$. Moreover since the fibers of $N\Sigma$ have dimension $1$, whenever $x \in U_\delta$ is such that $X|_{p(x)} \neq 0$ we have 
	\[F^{-1}(x)=s(x)X|_{p(x)} \quad \mbox{for some} \quad s(x) \in \R,\]
with $s(x)=0$ for $x \in \Sigma$. Hence $\hat X|_{x}$ is the tangent vector to the normal geodesic 
	\[\gamma(t)=\exp_{p(x)}(tX|_{p(x)})\]
at $t=s(x)$ and for all $x \in \Sigma$ in which $X$ does not vanish we have
	\[0 = \dfrac{D\gamma'}{dt}\bigg|_{t=0} = \nabla_{X|_x}\hat X.\]
This proves that
	\[\nabla_v \hat X|_x=0 \quad \mbox{for all} \quad x \in \Sigma \quad \mbox{s.t.} \quad X|_x \neq 0, \quad \mbox{and} \quad v \in N_x\Sigma.\]
Finally if $x \in \Sigma$ and $X|_x=0$ then $\hat X$ vanishes along the points in $p^{-1}(x) \cap U_\delta$, namely the geodesic $\gamma(t)=\exp_x(t \vec n)$ for an unit $\vec n \in N_x\Sigma$ and $|t|<\delta$. We conclude thus
	\[0 = \dfrac{D (\hat X|_\gamma)}{dt}\bigg|_{t=0} = \nabla_{\gamma'(0)}\hat X = \nabla_{\vec n} \hat X\]
and $\nabla_v \hat X$ vanishes for all $v \in N_x\Sigma$. 

To obtain the required extension $\tilde X$ on $M$ we simply use a cutoff function $\rho \in C^\infty(M)$ which vanishes outside a neighborhood $U_1 \subset U_\delta$ of $\Sigma$ and such that $\rho \equiv 1$ on a smaller neighborhood $U\subset\!\subset U_1$ of $\Sigma$, and let $\tilde X=\rho \hat X$. 

\end{proof}

\end{appendix}
	
\bibliography{main}
\bibliographystyle{siam}
	
\end{document}